\title{\vspace*{-1pc}%
The Friedrichs angle and alternating projections in Hilbert $C^{*}$-modules}
\author{Bram Mesland\S$^*$, Adam Rennie\dag
\thanks{email: 
\texttt{b.mesland@math.leidenuniv.nl}, \texttt{renniea@uow.edu.au}
}
\\[3pt]
\S Mathematisch Instituut, Universiteit Leiden, Netherlands
\\[3pt]
\dag School of Mathematics and Applied Statistics, University of Wollongong\\
Wollongong, Australia
}
\def\section{\@startsection{section}{1}{\z@}{-3.5ex plus -1ex minus
  -.2ex}{2.3ex plus .2ex}{\large\bf}}
\def\subsection{\@startsection{subsection}{2}{\z@}{-3.25ex plus -1ex
  minus -.2ex}{1.5ex plus .2ex}{\normalsize\bf}}
\numberwithin{equation}{section} %% needs `amsmath' package
\theoremstyle{plain} %% needs `amsmath' package
\newtheorem{thm}{Theorem}[section]
\newtheorem{lemma}[thm]{Lemma}
\newtheorem{prop}[thm]{Proposition}
\newtheorem{corl}[thm]{Corollary}
\theoremstyle{definition} %% needs `amsmath' package
\newtheorem{defn}[thm]{Definition}
\newtheorem*{ass*}{Standing assumption}
\theoremstyle{remark} %% needs `amsmath' package
\newtheorem{rmk}[thm]{Remark}
\DeclareMathOperator{\End}{End}   %% endomorphism algebra
\newcommand{\C}{\mathbb{C}}   %% complex numbers
\newcommand{\Ran}{\textnormal{Ran }}
\newcommand{\stroke}{\mathbin|}     %% (for `\pair' and such)
\def\pairL_#1(#2|#3){{}_{#1}(#2\stroke#3)} %% hermitian pairing _B(s|t)
\def\pairR(#1|#2)_#3{(#1\stroke#2)_{#3}} %% hermitian pairing (s|t)_A
\def\scal<#1|#2>{\langle#1\stroke#2\rangle} %% scalar product <y|z>
\newbox\ncintdbox \newbox\ncinttbox %% noncommutative integral symbols
\begin{document}

\maketitle

\vspace{-2pc}

\begin{abstract}
Let $B$ be a $C^{*}$-algebra, $X$ a Hilbert $C^{*}$-module over $B$ and $M,N\subset X$ a pair of complemented submodules. We prove the $C^{*}$-module version of von Neumann's alternating projections theorem: the sequence $(P_{N}P_{M})^{n}$ is Cauchy in the $*$-strong module topology if and only if $M\cap N$ is the complement of $\overline{M^{\perp}+N^{\perp}}$. In this case, the $*$-strong limit of $(P_{M}P_{N})^{n}$ is the orthogonal projection onto $M\cap N$. We use this result and the local-global principle to show that the cosine of the Friedrichs angle $c(M,N)$ between any pair of complemented submodules $M,N\subset X$ is well-defined and that $c(M,N)<1$ if and only if $M\cap N$ is complemented and $M+N$ is closed. 
\end{abstract}

{\bf Keywords:}
{\small two projections, von Neumann's alternating projection theorem, Friedrichs angle, Hilbert $C^{*}$-module, local-global principle.}

{\bf MSC2020:} {\small 46L08, 47A46}

\parskip=6pt
\parindent=0pt
\allowdisplaybreaks
\section*{Introduction}

In this note we offer a new and general approach to the two projection problem in Hilbert $C^{*}$-modules.
As an application we extend and improve upon several of the main results in the recent work of \cite{Luo} by giving new proofs that allow for the removal of a key hypothesis. 

Briefly, we begin by proving the Hilbert $C^{*}$-module version of von Neumann's alternating projections theorem, which computes the projection onto $M\cap N$ for a concordant pair of complemented submodules $M,N$ (see below). We then proceed to use this result to define the Friedrichs angle between an arbitrary pair of complemented submodules. The angle is realised as a function on the space of representations of the coefficient algebra of the module. The properties of the Friedrichs angle give necessary and sufficient conditions for the sum and intersection of two complemented submodules to again be complemented.
We now give a little more detail on these results. 

Given two closed subspaces $M,N$ of a Hilbert space $H$ there is an orthogonal direct sum decomposition
\begin{equation}
\label{directsum}
H=(M\cap N)\oplus \overline{(M^{\perp}+N^{\perp})}.
\end{equation}
A fundamental result of von Neumann, the \emph{method of alternating projections}, states that the projection $P_{M\cap N}$ onto $M\cap N$ can be obtained as the $*$-strong limit
\[P_{M\cap N}=s-\lim_{n\to\infty} (P_{M}P_{N})^{n}=s-\lim_{n\to\infty} (P_{N}P_{M})^{n}.\]
The (cosine) of the \emph{Friedrichs angle between $M$ and $N$} is the quantity 
$$
c(M,N):=\|P_{M}P_{N}-P_{M\cap N}\|,
$$ 
and the subspace $M^{\perp}+N^{\perp}$ is closed if and only if $c(M,N)<1$.

In this paper we consider a pair $(M,N)$ of complemented submodules of a Hilbert $C^{*}$-module $X$ over a $C^{*}$-algebra $B$. It is well-known that closed submodules of Hilbert $C^*$-modules need not be orthogonally complemented. This one technical constraint necessitates the discussion of adjointable endomorphisms and regular (unbounded) operators for these modules, \cite{FL,Lance}. 

The complementability issue does not arise for finite dimensional vector spaces of course, but already in the case of finite rank, locally trivial vector bundles on compact Hausdorff base spaces we see examples 
of non-complementability of intersections. Classically the issue gives rise to the notion of a strict homomorphism of vector bundles \cite[Section 1.3]{Atiyah}, and we relate the vector bundle situation to the complementability problem in Remarks \ref{eg:VB-unstrict}, \ref{discontangle} and \ref{Atiyah} below.

In Theorem \ref{vN} we show that the pair $(M,N)$ induces a direct sum decomposition like \eqref{directsum} of the Hilbert $C^*$-module $X$ if and only if von Neumann's theorem on alternating projections is valid for this pair of submodules. We call such pairs \emph{concordant} and characterise them in terms of their Hilbert space localisations in Theorem \ref{locharm}. Our results have implications for the Hilbert module version of the two projection problem, \cite{Luo}. The Hilbert space version first gained prominence in the work of Halmos \cite{H}, and has since had numerous incarnations and applications: for a recent survey see \cite{BS}.

In \cite{Luo}, the Friedrichs angle between complemented submodules has been defined under the constraint that $M\cap N$ is complemented. In Section \ref{sec:angle} of this note we remove this hypothesis and extend the definition of the Friedrichs angle to arbitrary pairs of complemented submodules via the local-global principle of \cite{Pierrot}. We interpret the Friedrichs angle as a function on the space $\widehat{B}$ of irreducible representations of $B$ and prove that $c(M,N)=c(M^{\perp},N^{\perp})$. We deduce that $c(M,N)<1$ if and only if the sequence $(P_{N}P_{M})^{n}$ is Cauchy for the operator norm if and only if $M\cap N$ is complemented and $M^{\perp}+N^{\perp}$ is closed.

{\bf Notation.} For a Hilbert $C^{*}$-module $X$ over a $C^{*}$-algebra $B$ we denote by $\End^{*}_{B}(X)$ the unital $C^{*}$-algebra of adjointable operators on $X$ and by $\mathbb{K}(X)\subset \End^{*}_{B}(X)$ the ideal of compact operators. The symbols $\otimes^{\textnormal{alg}}_{B}, \widehat{\otimes}_{B}$ and $\otimes_{B}$ denote the balanced algebraic, projective and $C^{*}$-module tensor products, respectively.

{\bf Acknowledgements.} We thank Marcel de Jeu for helpful conversations and Michael Frank for valuable correspondence.
\section{Concordant submodules}
Let $X$ be a Hilbert $C^{*}$-module over the $C^{*}$-algebra $B$.
Given two complemented submodules $M$ and $N$ of $X$, we write $P_{M},P_{N}$ respectively for the corresponding projections in $\End^{*}_{B}(X)$. The intersection $M\cap N$ is a closed submodule of $X$, and there is an inclusion
\[M^{\perp}+N^{\perp}\subset (M\cap N)^{\perp}.\]
The submodule $M^{\perp}+N^{\perp}$ need not be closed, but since $(M\cap N)^{\perp}$ is closed,
\[\overline{M^{\perp}+N^{\perp}}\subset (M\cap N)^{\perp},\]
as well. In case $X$ is a Hilbert space there is an equality (see (\cite[Theorem 4.6.4]{Dbook})
\begin{equation}
\label{concordant}
 \overline{M^{\perp}+N^{\perp}}= (M\cap N)^{\perp},
\end{equation} 
and thus the projections $P_{M\cap N}$ and $P_{\overline{M^{\perp}+N^{\perp}}}$ exist and satisfy $1-P_{M\cap N}=P_{\overline{M^{\perp}+N^{\perp}}}$. 

In general, the projections do not exist unless the submodules are complemented.  To our knowledge, it is an open question whether the intersection of complemented submodules is again complemented. In \cite[Section 3]{Luo} it was shown that even in case all the projections exist, \eqref{concordant} need not not hold (see Remark \ref{LMXcounter} below).
\begin{defn}
Let $M$ and $N$ be complemented submodules of a Hilbert $C^{*}$-module $X$. The pair $(M,N)$ is \emph{concordant} if $X=(M\cap N)\oplus\overline{(M^{\perp}+N^{\perp})}$. If the pair $(M,N)$ is not concordant, we say it is \emph{discordant}. 
\end{defn}
The pair $(M,N)$ is concordant if their intersection $M\cap N$ is complemented and its complement is $\overline{M^{\perp}+N^{\perp}}$.
\begin{rmk}\label{LMXcounter} The pair $(M,N)$ being concordant is strictly stronger than the requirement that $M\cap N$ be complemented. In \cite[Section 3]{Luo} it is shown that for $X=B=C([0,\frac{\pi}{2}], M_{2}(\mathbb{C}))$, the submodules
\[M=\Ran \begin{pmatrix} 1 & 0 \\ 0 & 0 \end{pmatrix},\quad N= \Ran \begin{pmatrix} \cos^{2}t & \sin t \cos t \\ \sin t \cos t &\sin^{2}t\end{pmatrix},\]
satisfy $M\cap N =0$, which is complemented, whereas $\overline{M^{\perp}+N^{\perp}}\neq X$ so $(M,N)$ is not concordant.
\end{rmk}

\begin{rmk} 
Note that $(M,N)$ is \emph{harmonious} in the sense of \cite[Definition 4.1]{Luo} if each of the submodules
\[
\overline{M+N},\,\, \overline{M+N^{\perp}},\,\,\overline{M^{\perp}+N},\,\, \overline{M^{\perp}+N^{\perp}} 
\]
is complemented. In this case the respective complements are
\[
M^{\perp}\cap N^{\perp},\,\,M^{\perp}\cap N,\,\, M+N^{\perp},\,\, M\cap N,
\]
as explained in the discussion after \cite[Definition 4.1]{Luo}. Thus $(M,N)$ is harmonious if and only if each of the pairs $(M,N)$, $(M,N^{\perp})$, $(M^{\perp},N)$ and $(M^{\perp},N^{\perp})$ is concordant.
 \end{rmk}
 \begin{rmk} If $M+N$ is closed, then by \cite[Proposition 4.6]{LSX} $M^{\perp}+N^{\perp}$ is closed and $X=(M\cap N)\oplus (M^{\perp}+N^{\perp})$. In particular, $M+N$ is closed if and only if $M^{\perp}+N^{\perp}$ is closed and in this case both $(M,N)$ and $(M^{\perp},N^{\perp})$ are concordant (see Proposition 3.10 below).
 \end{rmk}
 \begin{rmk}
 \label{rmkuniversal} 
 In \cite{RS} it was shown that the universal $C^{*}$-algebra $C^{*}(p,q)$ generated by two projections $p$ and $q$ admits the following concrete model
\[
C^{*}(p,q)\simeq\left\{A(t)\in C([0,\pi/2], M_2(\mathbb{C})): A(0)\ \mbox{and}\ A(\pi/2)\ \mbox{diagonal}\right\},
\]
with the isomorphism is determined by
\[
p\mapsto P:=\begin{pmatrix} 1 & 0 \\ 0 & 0 \end{pmatrix}, \quad q\mapsto Q
:=\begin{pmatrix} \cos^{2}t & \sin t \cos t \\ \sin t \cos t &\sin^{2}t\end{pmatrix}.
\]
From this point of view, the counterexample of \cite[Section 3]{Luo} discussed in Remark \ref{LMXcounter} above arises from the universal example. This shows that specific properties such as being concordant or harmonious hold in some representations of $C^{*}(p,q)$, but not in all of them.
\end{rmk}

We will now characterise concordant pairs by looking at their Hilbert space localisations.

Let $\pi:B\to \mathbb{B}(H_{\pi})$ be a representation of $B$ on the Hilbert space $H_{\pi}$ and write $X_{\pi}:=X\otimes_{B}H_{\pi}$. There is a representation
\begin{equation}
\widehat{\pi}:\End^{*}_{B}(X)\to \mathbb{B}(X_{\pi}),\quad T\mapsto T\otimes 1.
\label{eq:bob}
\end{equation}
Write $M_{\pi}:=M\otimes_{B}H_{\pi}\subset X\otimes_{B}H_{\pi}$, and similarly for $N$. Then $M_{\pi}$ and $N_{\pi}$ are closed subspaces of the Hilbert space $X_{\pi}$ and we have $P_{M_{\pi}}:=\widehat{\pi}(P_{M})=P_{M}\otimes 1$, as well as  $P_{N_{\pi}}:=\widehat{\pi}(P_{N})=P_{N}\otimes 1$. Since the subspace $M_{\pi}\cap N_{\pi}$ is closed, there is a projection $P_{M_{\pi}\cap N_{\pi}}\in\mathbb{B}(X_{\pi})$ that projects onto $M_{\pi}\cap N_{\pi}$. In general, the equality $M_{\pi}\cap N_{\pi}=(M\cap N)_{\pi}$ need not hold, even if $M\cap N$ is complemented. 
We recall the following fact.
\begin{prop}[Local-global principle for complemented submodules \cite{Pierrot}]
\label{locglob} 
Let $\Omega\subset X$ be a closed submodule. Then $\Omega$ is complemented if and only if for every irreducible representation $\pi:B\to \mathbb{B}(H_{\pi})$ there is an equality $(\Omega_{\pi})^{\perp}=(\Omega^{\perp})_{\pi}$.
\end{prop}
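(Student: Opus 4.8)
The plan is to reduce the statement to the existence of a self-adjoint projection onto $\Omega$, which in turn reduces to showing that the orthogonal algebraic sum $\Omega+\Omega^{\perp}$ exhausts $X$. Throughout I would use the elementary fact that $\Omega$ is complemented if and only if the inclusion $\Omega\hookrightarrow X$ is adjointable, equivalently if and only if $X=\Omega\oplus\Omega^{\perp}$ as an internal orthogonal direct sum of closed submodules (the associated idempotent is then automatically self-adjoint).

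For necessity, suppose $\Omega$ is complemented with projection $P_{\Omega}\in\End^{*}_{B}(X)$. Since the interior tensor product is additive over the orthogonal decomposition $X=\Omega\oplus\Omega^{\perp}$, one gets $X_{\pi}=\Omega_{\pi}\oplus(\Omega^{\perp})_{\pi}$ as an orthogonal Hilbert space decomposition, with $\widehat{\pi}(P_{\Omega})$ the projection onto $\Omega_{\pi}$ and $\widehat{\pi}(1-P_{\Omega})=\widehat{\pi}(P_{\Omega^{\perp}})$ the projection onto $(\Omega^{\perp})_{\pi}$. As these are complementary projections, $(\Omega_{\pi})^{\perp}=(\Omega^{\perp})_{\pi}$ for every $\pi$. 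For sufficiency I would first record that the inclusion $(\Omega^{\perp})_{\pi}\subseteq(\Omega_{\pi})^{\perp}$ always holds, because $\langle\omega,\eta\rangle=0$ for $\omega\in\Omega$, $\eta\in\Omega^{\perp}$ forces the corresponding elementary tensors in $X_{\pi}$ to be orthogonal; hence the hypothesis is equivalent to the orthogonal decomposition $X_{\pi}=\Omega_{\pi}\oplus(\Omega^{\perp})_{\pi}$ for every irreducible $\pi$. Next I would show that $\Omega+\Omega^{\perp}$ is already norm-closed: the addition map $\Omega\oplus\Omega^{\perp}\to X$, $(\omega,\eta)\mapsto\omega+\eta$, preserves the $B$-valued inner products, since the cross terms $\langle\omega,\eta\rangle$ vanish by orthogonality, so it is isometric and its range $W:=\Omega+\Omega^{\perp}$ is a closed submodule. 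Localising $W$ and invoking the hypothesis yields $W_{\pi}\supseteq\Omega_{\pi}+(\Omega^{\perp})_{\pi}=X_{\pi}$, and therefore $W_{\pi}=X_{\pi}$ for every irreducible $\pi$.

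The decisive step, and the point at which I expect the real difficulty to lie, is then the fullness assertion: a closed submodule $W\subseteq X$ with $W_{\pi}=X_{\pi}$ for all irreducible $\pi$ must equal $X$. This is exactly where I would appeal to the local–global principle of \cite{Pierrot}. The subtlety is genuine: the estimate $\inf_{w\in W}\|(x-w)\otimes v\|\le\inf_{w\in W}\|x-w\|$ shows immediately that distances in each localisation are dominated by the global distance, but the reverse inequality—recovering the global distance $\inf_{w\in W}\|x-w\|$ uniformly from the irreducible representations—is precisely the mechanism that \emph{fails} for closed submodules lacking the coherence encoded in the hypothesis (cf.\ the discordant examples of Remark~\ref{LMXcounter}), and constitutes the substantive part of Pierrot's argument. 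Granting this principle, $\overline{W}=X$; combined with the closedness established above one obtains $\Omega+\Omega^{\perp}=X$ as an internal orthogonal direct sum, so the idempotent projecting onto $\Omega$ along $\Omega^{\perp}$ is self-adjoint and adjointable, and $\Omega$ is complemented as required.
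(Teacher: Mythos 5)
Your proof is correct, and it takes a genuinely different (if ultimately equivalent) route from the paper's. The paper's proof is a two-line citation: it quotes Pierrot's Corollaire 1.17 in the form ``$X=\Omega\oplus\Omega^{\perp}$ if and only if $X_{\pi}=\Omega_{\pi}\oplus(\Omega^{\perp})_{\pi}$ for every irreducible $\pi$'' and then observes, exactly as you do, that the standing inclusion $(\Omega^{\perp})_{\pi}\subseteq(\Omega_{\pi})^{\perp}$ makes that local condition equivalent to the stated one. You instead prove the forward implication by hand (your argument works for arbitrary representations, which is what the paper's remark after the proposition needs for Corollary \ref{locharmcor}), and you reduce the converse to a fullness assertion for the closed submodule $W=\Omega+\Omega^{\perp}$. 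Your observation that $W$ is automatically norm-closed, because the addition map $\Omega\oplus\Omega^{\perp}\to X$ preserves the $B$-valued inner product, is a worthwhile extra step that the citation-only proof never has to make explicit. The fullness assertion you then need is precisely the special case $W^{\perp}=0$ of the same Corollaire 1.17: since $W^{\perp}=\Omega^{\perp}\cap(\Omega^{\perp})^{\perp}=0$, the corollary applied to $W$ reads $X=W$ if and only if $X_{\pi}=W_{\pi}$ for all irreducible $\pi$. So the two proofs lean on Pierrot at the same point and have identical logical content; yours buys a sharper picture of where the analytic difficulty sits (passing from fibrewise density over the irreducible representations back to global equality), at the cost of some routine verifications. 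The one caution for a write-up: the fullness statement must be sourced from Pierrot and not from the proposition under proof, since applying Proposition \ref{locglob} itself to $W$ would be circular; your phrasing suggests you are aware of this, but it should be said explicitly.
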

\begin{proof}
By \cite[Corollaire 1.17]{Pierrot}, we have that $X=\Omega\oplus \Omega^{\perp}$ if and only if for every irreducible representation $\pi:B\to \mathbb{B}(H_{\pi})$ there is an equality
\[
X_{\pi}=X\otimes_{B}H_{\pi}=(\Omega\oplus \Omega^{\perp})\otimes_{B}H_{\pi}=\Omega\otimes_{B}H_{\pi}\oplus \Omega^{\perp}\otimes_{B}H_{\pi}=\Omega_{\pi}\oplus (\Omega^{\perp})_{\pi}.
\]
Since  $(\Omega^{\perp})_{\pi}\subset (\Omega_{\pi})^{\perp}$, this holds if and only if $(\Omega^{\perp})_{\pi}=(\Omega_{\pi})^{\perp}$.
\end{proof}
A weaker form of this result was proved independently, though several years later, in \cite{KL12}. There, the local side of the equivalence involved \emph{all} representations of the $C^{*}$-algebra $B$. The two results are equivalent because the proof of the implication $\Rightarrow$ in Proposition \ref{locglob} holds verbatim for an arbitrary representation of the $C^{*}$-algebra $B$, see \cite{KL17}. We will use both instances of the result.
\begin{lemma}
\label{inclusion} 
Let $X$ be a Hilbert $C^{*}$-module over $B$, $M,N$ complemented submodules and $\pi:B\to\mathbb{B}(H_{\pi})$ a representation. Then there is an equality of closed subspaces
\[
\overline{(M_{\pi})^{\perp}+(N_{\pi})^{\perp}}=\big(\,\overline{M^{\perp}+N^{\perp}}\,\big)_{\pi}.
\]
\end{lemma}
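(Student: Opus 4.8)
The plan is to reduce everything to two facts: that localisation turns the orthogonal complement of a complemented submodule into the orthogonal complement of its localisation, and that localisation commutes with the closed sum of submodules.

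First I would record the local complementation identities $(M^{\perp})_{\pi}=(M_{\pi})^{\perp}$ and $(N^{\perp})_{\pi}=(N_{\pi})^{\perp}$. Since $M$ is complemented, $P_{M^{\perp}}=1-P_{M}$ lies in $\End^{*}_{B}(X)$, so $\widehat{\pi}(P_{M^{\perp}})=1-P_{M_{\pi}}$ is the orthogonal projection onto $(M_{\pi})^{\perp}$ in $\mathbb{B}(X_{\pi})$. On the other hand, the range of $\widehat{\pi}(P_{M^{\perp}})$ is the closure of $\{(P_{M^{\perp}}\xi)\otimes h:\xi\in X,\ h\in H_{\pi}\}$, which is precisely $(M^{\perp})_{\pi}$ because $\Ran P_{M^{\perp}}=M^{\perp}$; equating these two descriptions of the range gives the claim. (Equivalently, this is Proposition \ref{locglob} applied to $\Omega=M$, together with the remark extending it to arbitrary representations.) The identical argument applies to $N$.

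Next, writing $K:=\overline{M^{\perp}+N^{\perp}}$, the heart of the argument is the identity $K_{\pi}=\overline{(M^{\perp})_{\pi}+(N^{\perp})_{\pi}}$. The inclusion $\supseteq$ is formal: from $M^{\perp}\subseteq K$ and $N^{\perp}\subseteq K$ we obtain $(M^{\perp})_{\pi}\subseteq K_{\pi}$ and $(N^{\perp})_{\pi}\subseteq K_{\pi}$ by monotonicity of localisation on submodules, and $K_{\pi}$ is closed. For $\subseteq$, I would use that $M^{\perp}+N^{\perp}$ is dense in $K$: the estimate $\|k\otimes h-d\otimes h\|^{2}=\langle h,\pi(\langle k-d,k-d\rangle)h\rangle\le\|k-d\|^{2}\|h\|^{2}$ shows that each generator $k\otimes h$ of $K_{\pi}$ (with $k\in K$) is a norm-limit of elements $d\otimes h$ with $d\in M^{\perp}+N^{\perp}$, and every such $d\otimes h=\xi\otimes h+\eta\otimes h$ lies in $(M^{\perp})_{\pi}+(N^{\perp})_{\pi}$; since the $k\otimes h$ are total in $K_{\pi}$, this yields $K_{\pi}\subseteq\overline{(M^{\perp})_{\pi}+(N^{\perp})_{\pi}}$.

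Combining the two steps gives $\big(\overline{M^{\perp}+N^{\perp}}\big)_{\pi}=\overline{(M^{\perp})_{\pi}+(N^{\perp})_{\pi}}=\overline{(M_{\pi})^{\perp}+(N_{\pi})^{\perp}}$, as required. I expect the only genuinely non-formal point to be the density step: one must check that a dense submodule $D\subseteq K$ produces a total set $\{d\otimes h\}$ in the localisation $K_{\pi}$, and this is where the $C^{*}$-module structure enters, through the inequality $\|\pi(\langle k-d,k-d\rangle)\|\le\|k-d\|^{2}$. Everything else is bookkeeping with ranges of adjointable projections and monotonicity of the interior tensor product.
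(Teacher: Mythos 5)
Your proposal is correct and follows essentially the same route as the paper: both reduce the claim to the identity $\big(\overline{M^{\perp}+N^{\perp}}\big)_{\pi}=\overline{(M^{\perp})_{\pi}+(N^{\perp})_{\pi}}$, proved via density of $(M^{\perp}+N^{\perp})\otimes^{\textnormal{alg}}_{B}H_{\pi}$ in the localisation, and then invoke $(M^{\perp})_{\pi}=(M_{\pi})^{\perp}$ for complemented submodules. The extra detail you supply (the range description of $\widehat{\pi}(P_{M^{\perp}})$ and the explicit norm estimate in the density step) is correct but only makes explicit what the paper leaves implicit.
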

\begin{proof}
The inclusion of subspaces
%\left(\overline{M^{\perp}+N^{\perp}}\right)_{\pi}\subset \overline{((M^{\perp})_{\pi}+(N^{\perp})_{\pi})}, \quad 
\begin{align*}
(M^{\perp})_{\pi}+(N^{\perp})_{\pi}\subset \Big(\overline{M^{\perp}+N^{\perp}}\Big)_{\pi}
\end{align*}
shows that we have an inclusion of closed linear subspaces
\begin{equation*}
\overline{(M^{\perp})_{\pi}+(N^{\perp})_{\pi}}\subset \left(\overline{M^{\perp}+N^{\perp}}\right)_{\pi}.\end{equation*} 
 The subspace $(M^{\perp}+N^{\perp})\otimes^{\textnormal{alg}}_{B}H_{\pi}$ is dense in $(\,\overline{M^{\perp}+N^{\perp}}\,)_{\pi}$ and since
 \[
 (M^{\perp}+N^{\perp})\otimes^{\textnormal{alg}}_{B}H_{\pi}\subset(M^{\perp})_{\pi}+(N^{\perp})_{\pi}\subset \overline{(M^{\perp})_{\pi}+(N^{\perp})_{\pi}}\subset \left(\overline{M^{\perp}+N^{\perp}}\right)_{\pi},\]
 it follows that $\overline{(M^{\perp})_{\pi}+(N^{\perp})_{\pi}}=\left(\overline{M^{\perp}+N^{\perp}}\right)_{\pi}$. Since $M$ and $N$ are complemented we have $(M_{\pi})^{\perp}=(M^{\perp})_{\pi}$ and $(N_{\pi})^{\perp}=(N^{\perp})_{\pi}$ and thus $\overline{(M_{\pi})^{\perp}+(N_{\pi})^{\perp}}=\left(\overline{M^{\perp}+N^{\perp}}\right)_{\pi}$.
\end{proof}
\begin{thm}
\label{locharm}
Let $X$ be a Hilbert $C^{*}$-module over $B$ and $M$ and $N$ complemented submodules. Then 
the pair $(M,N)$ is concordant if and only if for every irreducible 
representation $\pi:B\to \mathbb{B}(H_{\pi})$ there is an equality of closed subspaces $M_{\pi}\cap N_{\pi}=(M\cap N)_{\pi}$.
\end{thm}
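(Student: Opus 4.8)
The plan is to reduce the concordance condition to a single complementability statement, to which the local-global principle (Proposition \ref{locglob}) applies directly. Write $W:=\overline{M^{\perp}+N^{\perp}}$. The first step is the observation that, because $M$ and $N$ are complemented,
\[
W^{\perp}=(M^{\perp}+N^{\perp})^{\perp}=(M^{\perp})^{\perp}\cap(N^{\perp})^{\perp}=M\cap N,
\]
using $\overline{S}^{\perp}=S^{\perp}$ and the biorthogonality identity $(M^{\perp})^{\perp}=M$ valid for complemented submodules. Consequently the concordance condition $X=(M\cap N)\oplus W$ is literally the statement $X=W^{\perp}\oplus W$, i.e.\ that $W$ is a complemented submodule. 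Thus $(M,N)$ is concordant if and only if $W$ is complemented, and it is this reformulation that lets us avoid arguing directly that $\overline{M^{\perp}+N^{\perp}}$ coincides with $(M\cap N)^{\perp}$.

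Next I would apply Proposition \ref{locglob} to the single submodule $W$. It gives that $W$ is complemented if and only if $(W_{\pi})^{\perp}=(W^{\perp})_{\pi}$ for every irreducible representation $\pi$. By the first step the right-hand side is $(W^{\perp})_{\pi}=(M\cap N)_{\pi}$. For the left-hand side, Lemma \ref{inclusion} gives $W_{\pi}=\overline{(M_{\pi})^{\perp}+(N_{\pi})^{\perp}}$, and since $X_{\pi}$ is a Hilbert space the identity \eqref{concordant} applied to the closed subspaces $M_{\pi},N_{\pi}$ yields $W_{\pi}=(M_{\pi}\cap N_{\pi})^{\perp}$; taking orthogonal complements in $X_{\pi}$ then gives $(W_{\pi})^{\perp}=M_{\pi}\cap N_{\pi}$. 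Substituting both computations, the local-global criterion becomes precisely the equality $M_{\pi}\cap N_{\pi}=(M\cap N)_{\pi}$ for every irreducible $\pi$, which is the desired equivalence.

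The main conceptual obstacle is the first step: recognising that concordance is equivalent to complementability of $W$. A more naive route would try to establish the two halves of concordance separately, namely that $M\cap N$ is complemented and that its complement equals $\overline{M^{\perp}+N^{\perp}}$; the latter requires passing from equality of all irreducible localizations back to equality of the two submodules, which is not automatic for closed submodules. Folding everything into the single statement ``$W$ is complemented'' lets Proposition \ref{locglob} perform this passage in one stroke. The remaining ingredients are routine once one verifies the biorthogonality identity $(M^{\perp})^{\perp}=M$ (valid because $M$ is complemented), the relation $\overline{S}^{\perp}=S^{\perp}$, and that the Hilbert-space formula \eqref{concordant} may be applied in $X_{\pi}$ together with $(M_{\pi})^{\perp}=(M^{\perp})_{\pi}$ as recorded in Lemma \ref{inclusion}.
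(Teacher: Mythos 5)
Your proposal is correct and rests on the same ingredients as the paper's proof: the local-global principle (Proposition \ref{locglob}), Lemma \ref{inclusion}, and the Hilbert-space identity \eqref{concordant} applied in each localisation $X_{\pi}$. The only difference is organisational: you observe that $W:=\overline{M^{\perp}+N^{\perp}}$ satisfies $W^{\perp}=M\cap N$ (so concordance is exactly complementability of $W$) and then apply Proposition \ref{locglob} once to $W$, whereas the paper proves the two implications separately, applying the principle to $M\cap N$ in one direction and to the localised decomposition $(M\cap N)_{\pi}\oplus W_{\pi}=X_{\pi}$ in the other; your reformulation packages both directions into a single equivalence and is a clean, valid streamlining.
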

\begin{proof} 
Suppose that $M$ and $N$ are concordant so that
\[
X=(M\cap N)\oplus \left(\overline{M^{\perp}+N^{\perp}}\right).
\]
Therefore Proposition \ref{locglob} and Lemma \ref{inclusion} give
 \[
 ((M\cap N)_{\pi})^{\perp}=((M\cap N)^{\perp})_{\pi}=(\overline{M^{\perp}+N^{\perp}})_{\pi}=\overline{(M_{\pi})^{\perp}+(N_{\pi})^{\perp}}.
 \]  

Taking orthogonal complements we find 
$(M\cap N)_{\pi}=\Big(\overline{(M_{\pi})^{\perp}+(N_{\pi})^{\perp}}\Big)^{\perp}=M_{\pi}\cap N_{\pi}$.

Conversely, suppose that $M_{\pi}\cap N_{\pi}=(M\cap N)_{\pi}$ for all irreducible 
representations $\pi$. 
By Lemma \ref{inclusion} and Equation \eqref{concordant} we have 
\[
(\overline{M^{\perp}+N^{\perp}})_{\pi}= \overline{(M_{\pi})^{\perp}+(N_{\pi})^{\perp}}=(M_{\pi}\cap N_{\pi})^{\perp},
\] 

and we deduce that
\begin{align*}
(M\cap N)_{\pi}\oplus (\overline{M^{\perp}+N^{\perp}})_{\pi}&= (M_{\pi}\cap N_{\pi})\oplus (M_{\pi}\cap N_{\pi})^{\perp}=X_{\pi}.
\end{align*}

By Proposition \ref{locglob} we conclude that $X=(M\cap N)\oplus \overline{M^{\perp}+N^{\perp}}.$
\end{proof}
In line with the local-global principle, Proposition \ref{locglob}, we obtain the same result when we consider all representations of the base algebra $B$.
\begin{corl}
\label{locharmcor}
Let $X$ Hilbert $C^{*}$-module over $B$ and $M$ and $N$ complemented submodules. Then $(M,N)$ is concordant if and only if for every representation $\pi:B\to \mathbb{B}(H_{\pi})$ there is an equality of closed subspaces $M_{\pi}\cap N_{\pi}=(M\cap N)_{\pi}$.
\end{corl}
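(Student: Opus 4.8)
The plan is to deduce the corollary directly from Theorem~\ref{locharm}, the point being that in the nontrivial direction only the \emph{forward} implication of the local-global principle is needed, and that this forward implication is available for arbitrary, not merely irreducible, representations.

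For the implication from the local condition to concordance there is essentially nothing to prove beyond restricting the hypothesis: if $M_{\pi}\cap N_{\pi}=(M\cap N)_{\pi}$ holds for every representation $\pi$, then in particular it holds for every irreducible representation, and Theorem~\ref{locharm} then yields that $(M,N)$ is concordant. So I would dispose of this direction in one line.

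For the reverse implication I would revisit the ``$\Rightarrow$'' half of the proof of Theorem~\ref{locharm} and check that it survives the passage from irreducible to arbitrary representations. Fix any representation $\pi\colon B\to\mathbb{B}(H_{\pi})$ and assume $(M,N)$ is concordant, so that $M\cap N$ is complemented with complement $\overline{M^{\perp}+N^{\perp}}$. That argument used exactly two inputs: Lemma~\ref{inclusion}, which is already stated and proved for an arbitrary representation; and the equality $((M\cap N)_{\pi})^{\perp}=((M\cap N)^{\perp})_{\pi}$, obtained by applying the forward implication of Proposition~\ref{locglob} to the complemented submodule $\Omega=M\cap N$. As recorded in the discussion following Proposition~\ref{locglob} and attributed to \cite{KL17}, this forward implication holds verbatim for any representation of $B$. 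Granting this, the chain of equalities
\[
((M\cap N)_{\pi})^{\perp}=((M\cap N)^{\perp})_{\pi}=\big(\overline{M^{\perp}+N^{\perp}}\big)_{\pi}=\overline{(M_{\pi})^{\perp}+(N_{\pi})^{\perp}}
\]
carries over unchanged, and taking orthogonal complements gives $(M\cap N)_{\pi}=M_{\pi}\cap N_{\pi}$ for this arbitrary $\pi$.

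The one point requiring care, and what I expect to be the only genuine subtlety, is that the \emph{reverse} implication of the local-global principle really does rely on a separating family of representations and is \emph{not} available for a single arbitrary $\pi$. I would emphasise, however, that this reverse implication enters only in the ``$\Leftarrow$'' half of Theorem~\ref{locharm}, which here is replaced by the trivial restriction argument of the second paragraph; the ``$\Rightarrow$'' half invokes Proposition~\ref{locglob} solely through its forward implication, whose validity for all representations is precisely the cited fact of \cite{KL17}. Thus no irreducibility is secretly used in the direction that matters, and the two formulations are genuinely equivalent.
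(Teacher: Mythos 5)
Your proposal is correct and follows essentially the same route as the paper: the paper's proof is precisely the observation that the $\Rightarrow$ half of Theorem~\ref{locharm} goes through verbatim for an arbitrary representation (since Lemma~\ref{inclusion} and the forward implication of Proposition~\ref{locglob} both hold in that generality), while the converse is immediate by restriction to irreducible representations. Your elaboration of why no irreducibility is used in the nontrivial direction is exactly the content the paper's one-line proof leaves implicit.
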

\begin{proof} 
The proof of $\Rightarrow$ in Theorem \ref{locharm} shows that $M_{\pi}\cap N_{\pi}=(M\cap N)_{\pi}$ for every representation whenever $(M,N)$ is concordant.
\end{proof}
\begin{rmk}
\label{eg:VB-unstrict}
Consider $B=C([0,\frac{\pi}{2}])$, $X=C([0,\frac{\pi}{2}], \mathbb{C}^{2})$ and consider the submodules
\[
M=\Ran\!\!\begin{pmatrix} 1 & 0 \\ 0 & 0 \end{pmatrix},\quad N
= \Ran\!\!\begin{pmatrix} \cos^{2}t & \sin t \cos t \\ \sin t \cos t &\sin^{2}t\end{pmatrix}.
\] We have $M\cap N=0$ and for the irreducible representations given by $t\in[0,\pi/2]$ we have
\[
M_t\cap N_t=\begin{cases} 0 &  t=0\\ \C & t\neq 0,\end{cases}
\]
so $(M,N)$ is discordant by Theorem \ref{locharm}.
\end{rmk}

%\[
%\dim\ker\varphi_t=\begin{cases} 0 & t\neq \pi/2\\ 1 & t=\pi/2.\end{cases}
%\]

\section{Von Neumann's theorem of alternating projections}
Let $P,Q\in \End^{*}_{B}(X)$ be projections
\[
P^{*}=P^{2}=P,\quad Q^{*}=Q^{2}=Q.
\]
The submodules $\Ran P$ and $\Ran Q$ are complemented in $X$, and every complemented submodule is the range of an adjointable projection. As noted before, it is an open question whether the intersection $\Omega:=\Ran P\cap \Ran Q$, which is a closed submodule, is complemented. In case $B=\mathbb{C}$ and $X$ is a Hilbert space this is true and thus there is a projection $P_{\Omega}$ with $\Ran P_{\Omega}=\Omega$. For $n\geq 0$, write
\[
(P,Q)_{n}:=\cdots PQPQ,\quad \textnormal{the product of exactly } n \,\,\textnormal{alternating factors ending in } Q.
\]
Von Neumann proved the following well-known theorem.
\begin{thm}[{\cite[Lemma 22]{vN}}]
\label{vN} 
Let $H$ be a Hilbert space, $M,N\subset H$ closed subspaces and $\Omega:=M\cap N$. Let $P=P_{M}$ and $Q=P_{N}$ be the orthogonal projections onto $M$ and $N$ respectively. The orthogonal projection $P_{\Omega}$ onto $\Omega$ can be obtained as the strong limit of any of the sequences
\begin{equation}
\label{alternatingseq}
(PQ)^{n},\quad (QP)^{n}, \quad (P,Q)_{n},\quad (Q,P)_{n},
\end{equation}
or any of their subsequences. 
\end{thm}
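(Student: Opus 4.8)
The plan is to reduce all four sequences to the spectral behaviour of the single self-adjoint contraction $A:=PQP$, which satisfies $0\le A\le I$ because $\langle PQPx,x\rangle=\|QPx\|^2$. The first step is to identify the eigenspace $\ker(I-A)$ at the eigenvalue $1$. If $Ax=x$, then $\|QPx\|^2=\langle PQPx,x\rangle=\|x\|^2$, while $\|QPx\|\le\|Px\|\le\|x\|$; this forces equality throughout, so $\|Px\|=\|x\|$ gives $Px=x$ (i.e. $x\in M$) and $\|QPx\|=\|Px\|$ gives $QPx=Px$, hence $Qx=x$ (i.e. $x\in N$). Thus $x\in\Omega$, and the reverse inclusion $\Omega\subset\ker(I-A)$ is immediate from $Px=Qx=x$. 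So $\ker(I-A)=\Omega$.

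The second step is the spectral theorem: writing $A=\int_0^1\lambda\,dE_\lambda$ and noting that $\lambda^n\to\mathbf{1}_{\{1\}}(\lambda)$ pointwise and boundedly on $[0,1]$, the functional calculus yields $A^n\to E(\{1\})=P_\Omega$ in the strong operator topology. The third step transfers this to the alternating products using the elementary projection identities $(PQP)^m=P(QP)^m=(PQ)^mP$. From these one computes $(PQ)^n(QP)^n=(PQP)^{2n-1}$, so that $\|(QP)^nx\|^2=\langle (PQ)^n(QP)^nx,x\rangle=\langle A^{2n-1}x,x\rangle\to\langle P_\Omega x,x\rangle=\|P_\Omega x\|^2$. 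Since $PP_\Omega=QP_\Omega=P_\Omega$, the element $P_\Omega$ is absorbed, giving $(PQ)^nP_\Omega=P_\Omega$ and hence the cross term $\langle (QP)^nx,P_\Omega x\rangle=\langle x,(PQ)^nP_\Omega x\rangle=\|P_\Omega x\|^2$. The error therefore collapses:
\[
\|(QP)^nx-P_\Omega x\|^2=\|(QP)^nx\|^2-\|P_\Omega x\|^2\to 0,
\]
so $(QP)^n\to P_\Omega$ strongly. Running the same argument with the roles of $P$ and $Q$ exchanged (using $B:=QPQ$, for which $\ker(I-B)=\Omega$ as well) gives $(PQ)^n\to P_\Omega$.

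The odd alternating products follow by continuity: writing $(P,Q)_{2m+1}=Q(PQ)^m$ and using $QP_\Omega=P_\Omega$, boundedness of $Q$ gives $(P,Q)_{2m+1}x\to QP_\Omega x=P_\Omega x$, while the even terms $(P,Q)_{2m}=(PQ)^m$ already converge; thus both parity subsequences converge to $P_\Omega$, so $(P,Q)_n\to P_\Omega$, and symmetrically $(Q,P)_n\to P_\Omega$. The sequences $(PQ)^n,(QP)^n$ are the even subsequences, and since every subsequence of a strongly convergent sequence shares its limit, the claim for all four families and their subsequences follows. The main obstacle I anticipate is that one cannot expect norm convergence here: $\|PQ-P_\Omega\|$ is exactly the cosine of the Friedrichs angle and may equal $1$, so a Neumann-series or contraction estimate is unavailable and the strong-topology spectral input is genuinely needed. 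Correspondingly, the two delicate points are the exact identification $\ker(I-PQP)=\Omega$ at the eigenvalue $1$ and the telescoping that reduces $\|(QP)^nx-P_\Omega x\|^2$ to $\|(QP)^nx\|^2-\|P_\Omega x\|^2$; the remainder is bookkeeping with the projection identities.
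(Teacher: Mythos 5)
Your proof is correct. Note first that the paper does not actually prove Theorem \ref{vN}: it is imported as a classical result with a citation to von Neumann, so there is no in-paper proof to compare against line by line. Your argument is the standard spectral-theorem route and all the steps check out: the identification $\ker(I-PQP)=\Omega$ via the equality case of $\|QPx\|\le\|Px\|\le\|x\|$ is right; the strong convergence $A^{n}\to E(\{1\})=P_{\Omega}$ follows from dominated convergence for the spectral measures; and the word identities $(PQP)^{m}=P(QP)^{m}=(PQ)^{m}P$ and $(PQ)^{n}(QP)^{n}=(PQP)^{2n-1}$ correctly reduce $\|(QP)^{n}x-P_{\Omega}x\|^{2}$ to $\langle A^{2n-1}x,x\rangle-\|P_{\Omega}x\|^{2}$. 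The handling of the odd-length words $(P,Q)_{2m+1}=Q(PQ)^{m}$ and the subsequence claim are also fine.

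The instructive comparison is with how the paper proves the $C^{*}$-module analogue (Proposition \ref{Cauchy} and Theorem \ref{complementedalternating}), where your method is exactly what breaks down: in a Hilbert $C^{*}$-module there is no Borel functional calculus converging in a ``strong'' topology, and $\ker(I-PQP)$ need not be complemented, so one cannot produce $P_{\Omega}$ as a spectral projection. The paper instead localises to Hilbert spaces (where it invokes Theorem \ref{vN}, i.e.\ the statement you proved) to get weak convergence, then uses Mazur's lemma together with the operator monotonicity $(PQP)^{m}\le(PQP)^{n}$ for $m\ge n$ to upgrade to convergence in the $*$-strong module topology. Your closing remark that norm convergence is unavailable because $\|PQ-P_{\Omega}\|$ can equal $1$ is exactly the point made quantitative in Section \ref{sec:angle} via the Friedrichs angle.
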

In a Hilbert $C^{*}$-module $X$, the analogue of the $*$-strong topology is  defined by the seminorms
\[
\|T\|_{x}:=\max\{\|Tx\|,\|T^{*}x\|\},\quad x\in X,
\]
and we refer to this topology as the $*$-\emph{strong module topology}.
On bounded sets the $*$-strong module topology coincides with the 
\emph{strict topology} on $\End_{B}^{*}(X)$ relative to the ideal $\mathbb{K}(X)$, \cite[Proposition 5.5.9]{Troitsky}.
The following fact is well-known.
\begin{lemma}\label{complete}
The $*$-strong module topology is complete on bounded sets.
\end{lemma}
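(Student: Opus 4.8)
The plan is to take an arbitrary net $(T_\alpha)_\alpha$ in $\End^*_B(X)$ with a uniform bound $\|T_\alpha\|\le C$ that is Cauchy for the $*$-strong module topology, and to produce an adjointable limit lying in the same ball. Since the topology is in general not metrizable, I work with nets throughout. Unwinding the definition, being $*$-strongly Cauchy means that for each fixed $x\in X$ and each $\varepsilon>0$ the inequalities $\|T_\alpha x-T_\beta x\|<\varepsilon$ and $\|T_\alpha^* x-T_\beta^* x\|<\varepsilon$ hold for all sufficiently large $\alpha,\beta$. Hence for every $x$ both nets $(T_\alpha x)_\alpha$ and $(T_\alpha^* x)_\alpha$ are Cauchy in the Banach space $X$, which is complete, so they converge; I then \emph{define} $Tx:=\lim_\alpha T_\alpha x$ and $Sx:=\lim_\alpha T_\alpha^* x$.

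The next step is the routine verification that $T$ and $S$ are bounded right $B$-linear maps with $\|T\|,\|S\|\le C$. Linearity and $B$-linearity pass to the limit: for instance $T(xb)=\lim_\alpha T_\alpha(xb)=\lim_\alpha(T_\alpha x)b=(Tx)b$, using continuity of right multiplication by $b$. Boundedness is immediate from $\|Tx\|=\lim_\alpha\|T_\alpha x\|\le C\|x\|$, and identically for $S$.

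The crux of the argument — and the only place where the $*$ in the topology is essential — is to show that $T$ is adjointable with $T^*=S$. For this I would invoke separate norm-continuity of the $B$-valued inner product: for all $x,w\in X$,
\[
\braket{Tx}{w}=\lim_\alpha\braket{T_\alpha x}{w}=\lim_\alpha\braket{x}{T_\alpha^* w}=\braket{x}{Sw},
\]
where the middle equality is the adjoint relation for each $T_\alpha$ and the two outer equalities use that $v\mapsto\braket{v}{w}$ and $v\mapsto\braket{x}{v}$ are norm continuous. This identity says exactly that $T\in\End^*_B(X)$ with $T^*=S$, and $\|T\|\le C$. By construction $\|T_\alpha x-Tx\|\to 0$ and $\|T_\alpha^* x-T^*x\|\to 0$ for every $x$, so $\|T_\alpha-T\|_x\to 0$ for all $x$, i.e. $T_\alpha\to T$ in the $*$-strong module topology, with $T$ in the same bounded set as the net.

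I expect the adjointability step to be the only genuine subtlety. A merely \emph{strongly} convergent bounded net need not admit an adjointable strong limit; it is precisely the simultaneous convergence of $(T_\alpha^* x)_\alpha$ built into the $*$-strong topology, together with the completeness of $X$, that supplies the candidate adjoint $S$ and lets the inner-product computation close. The boundedness hypothesis enters only to guarantee $\|T\|\le C<\infty$, ensuring the limit is again a bounded adjointable operator and hence lands in $\End^*_B(X)$.
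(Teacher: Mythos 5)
Your argument is correct and follows essentially the same route as the paper: define the candidate limit and its adjoint by pointwise norm limits (using completeness of $X$), and verify adjointability via continuity of the inner product. The only cosmetic difference is that the paper works with sequences and invokes the Uniform Boundedness Principle for the bound, whereas you work with nets and use the assumed uniform bound $C$ directly, which is if anything slightly more careful given that the topology is not metrizable.
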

\begin{proof} Let $T_{n}\in\End^{*}_{B}(X)$ be a sequence that is Cauchy for the seminorms $\|\cdot \|_{x}$, $x\in X$. By the Uniform Boundedness Principle, the operators \[Tx:=\lim_{n\to\infty} T_{n}x, \quad \textnormal{and} \quad T^{*}x:=\lim_{n\to\infty} T^{*}_{n}x,\] are well-defined, bounded and mutually adjoint.
\end{proof}
\begin{lemma}\label{reductionlemma} Let $P,Q\in\End^{*}_{B}(X)$ be projections. Then $(PQ)^{n}$ and $(QP)^{n}$ are $*$-strongly Cauchy if and only if $(PQP)^{n}$ and $(QPQ)^{n}$ are $*$-strongly Cauchy  if and only if $(P,Q)_{n}$ and $(Q,P)_{n}$ (as defined in \eqref{alternatingseq}) are $*$-strongly Cauchy. The same statement holds for the norm topology.
\end{lemma}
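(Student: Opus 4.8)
The plan is to reduce all three conditions to algebraic identities together with two soft facts: that multiplication by a fixed adjointable operator preserves Cauchy sequences, and that the relevant (norm $\le 1$) sequences converge by completeness on bounded sets, Lemma~\ref{complete}. For the continuity, observe that for fixed $T\in\End^*_B(X)$ one has $\|T(S_n-S_m)\|_x\le\max\{\|T\|\,\|S_n-S_m\|_x,\ \|S_n-S_m\|_{T^*x}\}$ and symmetrically for $S\mapsto ST$, so both left and right multiplication send $*$-strongly Cauchy sequences to $*$-strongly Cauchy sequences (and trivially so in norm). Combined with the identities, immediate from $P^2=P$, $Q^2=Q$,
\[
(PQP)^n=(PQ)^nP=P(QP)^n,\qquad (QPQ)^n=(QP)^nQ=Q(PQ)^n,
\]
\[
(PQ)^n=(PQP)^{n-1}Q,\qquad (QP)^n=(QPQ)^{n-1}P,
\]
the first ``if and only if'' follows at once: each of the families $\{(PQ)^n,(QP)^n\}$ and $\{(PQP)^n,(QPQ)^n\}$ is obtained from the other by right multiplication by $P$ or $Q$ and an index shift, neither of which affects the Cauchy property. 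This works verbatim in both topologies.

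For the sequences $(P,Q)_n$ and $(Q,P)_n$ I would first dispose of the easy direction. Since $(P,Q)_{2k}=(PQ)^k$ and $(Q,P)_{2k}=(QP)^k$, the families $(PQ)^n,(QP)^n$ are the even subsequences, so their Cauchy-ness follows immediately from that of $(P,Q)_n,(Q,P)_n$. The reverse implication is the substantive point. Writing $E:=\lim_n(PQ)^n$ and $S:=\lim_n(QPQ)^n=\lim_n Q(PQ)^n=QE$, which exist by Lemma~\ref{complete}, the even part of $(P,Q)_n$ converges to $E$ while the odd part $(P,Q)_{2k+1}=Q(PQ)^k=(QPQ)^k$ converges to $S=QE$. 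Hence $(P,Q)_n$ is Cauchy precisely when these two limits agree, i.e.\ when $E=QE$; this matching is the crux of the whole lemma.

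I expect $E=QE$ to be the main obstacle, and I would resolve it using the $C^*$-module structure as follows. Since each $(QPQ)^n$ is self-adjoint and, by joint $*$-strong continuity of multiplication on bounded sets, $S^2=\lim_n(QPQ)^{2n}=S$, the limit $S$ is a self-adjoint idempotent, hence an orthogonal projection. The identities give $S=QPS$ (from $QP(QPQ)^n=(QPQ)^{n+1}$) and $E=PS$ (from $P(QPQ)^{n-1}=(PQ)^n$). Thus every $y\in\Ran S$ satisfies $y=QPy$, whence
\[
\langle y,y\rangle=\langle QPy,QPy\rangle=\langle Py,QPy\rangle\le\langle Py,Py\rangle\le\langle y,y\rangle
\]
forces $\langle y,y\rangle=\langle Py,Py\rangle$; by the Pythagorean identity $\langle y,y\rangle=\langle Py,Py\rangle+\langle(1-P)y,(1-P)y\rangle$ this gives $(1-P)y=0$, i.e.\ $Py=y$. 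Therefore $\Ran S\subset\Ran P$, so $PS=S$ and $E=PS=S=QE$, as needed. It is exactly this positivity squeeze (rather than a mere norm estimate) that uses the Hilbert module structure. The symmetric argument, swapping $P$ and $Q$ and passing to adjoints, yields $EP=E$, i.e.\ that the odd part $P(QP)^k=(PQP)^k$ of $(Q,P)_n$ converges to the same limit as its even part. Consequently both $(P,Q)_n$ and $(Q,P)_n$ have even and odd subsequences with a common limit, so the full sequences converge and are in particular Cauchy.

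Finally, the norm-topology statement is obtained by the identical argument: multiplication by a fixed operator is norm continuous, norm completeness is automatic, the self-adjoint idempotent $S$ arises in the same way as a norm limit, and the squeeze establishing $E=QE$ is purely algebraic and hence topology-independent. In summary, the only genuinely nontrivial step is the positivity squeeze forcing $Py=y$ from $y=QPy$; everything else is formal manipulation of the displayed identities together with continuity of multiplication.
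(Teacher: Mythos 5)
Your proof is correct, but it follows a genuinely different route from the paper's. The paper's entire argument rests on a single monotone sandwich inequality at the level of module inner products,
$\langle (PQ)^{n}x,(PQ)^{n}x\rangle\geq\langle (QPQ)^{n}x,(QPQ)^{n}x\rangle\geq\langle (PQ)^{n+1}x,(PQ)^{n+1}x\rangle$,
which it then applies to the differences $(PQ)^{n}-(PQ)^{m}=(PQ)^{n}(1-(PQ)^{m-n})$ to transfer the Cauchy property back and forth between $(PQ)^{n}$ and $(QPQ)^{n}$ directly, without ever invoking completeness or constructing limits inside the lemma. You instead split the work: the equivalence of the first two conditions is handled softly via the identities $(PQP)^{n}=(PQ)^{n}P$, $(PQ)^{n}=(PQP)^{n-1}Q$ and the (correct) observation that one-sided multiplication by a fixed adjointable operator preserves $*$-strong Cauchy sequences; the interleaved sequences are then handled by passing to the limits $E=\lim(PQ)^{n}$ and $S=\lim(QPQ)^{n}$ via Lemma \ref{complete} and showing $E=S$ through the positivity squeeze forcing $Py=y$ from $y=QPy$. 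That squeeze is exactly the device the paper uses later, inside Proposition \ref{Cauchy}, to prove $Q\tilde{P}=\tilde{P}$, so you have in effect front-loaded a fragment of Proposition \ref{Cauchy} into the lemma. What your version buys is an explicit treatment of the one point the paper's proof passes over quickly, namely why Cauchyness of the two subsequences of $(P,Q)_{n}$ forces the full interleaved sequence to be Cauchy (in the paper this is implicit in the sandwich, since $\|(PQ)^{n}x-(QPQ)^{n}x\|^{2}=\|(PQ)^{n}x\|^{2}-\|(QPQ)^{n}x\|^{2}\to 0$ by monotonicity of the norms). The cost is that your argument already needs completeness on bounded sets at this stage, whereas the paper's inequalities compare differences directly and keep the lemma purely about the Cauchy property.
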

\begin{proof} 
Since
\[
(P,Q)_{n}=\left\{\begin{matrix}  (PQ)^{\frac{n}{2}} & n\quad \textnormal{even}\\ (QPQ)^{\frac{n-1}{2}} & n\quad  \textnormal{odd},\end{matrix}\right.\quad (Q,P)_{n}=\left\{\begin{matrix}  (QP)^{\frac{n}{2}} & n\quad \textnormal{even}\\ (PQP)^{\frac{n-1}{2}} & n\quad  \textnormal{odd},\end{matrix}\right. 
\]
it suffices to prove that $(PQ)^{n}$ and $(QP)^{n}$ are $*$-strongly Cauchy if and only $(PQP)^{n}$ and $(QPQ)^{n}$ are $*$-strongly Cauchy. The same holds for the norm topology.

Any projection $P$ satisfies $\langle Px,Px\rangle\leq \langle x,x\rangle$ and $Q(PQ)^{n}=(QPQ)^{n}$ so that
\begin{align}
\label{sandwich}
\langle (PQ)^{n}x,(PQ)^{n}x\rangle&=\langle (Q(PQ)^{n}+(1-Q)(PQ)^{n})x,(Q(PQ)^{n}+(1-Q)(PQ)^{n})x\rangle\nonumber\\
&\geq \langle (QPQ)^{n}x,(QPQ)^{n}x\rangle\nonumber\\
&=\langle (P(QPQ)^{n}+(1-P)(QPQ)^{n})x,(P(QPQ)^{n}+(1-P)(QPQ)^{n})x\rangle\nonumber\\
&\geq \langle (PQ)^{n+1}x,(PQ)^{n+1}x\rangle.
\end{align}
Now for $m>n$ we have 
\[
(PQ)^{n}-(PQ)^{m}=(PQ)^{n}(1-(PQ)^{m-n}),\quad  ((QPQ)^{n}-(QPQ)^{m})=(QPQ)^{n}(1-(QPQ)^{m-n}),
\]
which, together with \eqref{sandwich} gives
\begin{align*}
\langle ((PQ)^{n}-(PQ)^{m})x ,((PQ)^{n}&-(PQ)^{m})x\rangle =
\langle (PQ)^{n}(1-(PQ)^{m-n})x,(PQ)^{n}(1-(PQ)^{m-n})x\rangle \\ &\geq \langle (QPQ)^{n}(1-(PQ)^{m-n})x,(QPQ)^{n}(1-(PQ)^{m-n})x\rangle\\
&= \langle ((QPQ)^{n}-(QPQ)^{m})x,((QPQ)^{n}-(QPQ)^{m})x\rangle\\
&= \langle (QPQ)^{n}(1-(QPQ)^{m-n})x,(QPQ)^{n}(1-(QPQ)^{m-n})x\rangle\\
&\geq \langle (PQ)^{n+1}(1-(QPQ)^{m-n})x,(PQ)^{n+1}(1-(QPQ)^{m-n})x\rangle\\
&\geq \langle( (PQ)^{n+1}-(PQ)^{m+1})x,((PQ)^{n+1}-(PQ)^{m+1})x\rangle.
\end{align*}
This proves that $(PQ)^{n}$ is pointwise Cauchy if and only if $(QPQ)^{n}$ is pointwise Cauchy. Thus $(PQ)^{n}$ and $(QP)^{n}$ are both $*$-strongly Cauchy if and only if $(PQP)^{n}$ and $(QPQ)^{n}$ are both $*$-strongly Cauchy. The statements for the norm topology follow from the same inequalities. This completes the proof.
\end{proof}
\begin{prop} 
\label{Cauchy} 
Suppose that $(PQ)^{n}$ is $*$-strongly Cauchy. Then so are $(QP)^{n}$, $(PQP)^{n}$, $(QPQ)^{n}$, $(Q,P)_{n}$ and $(P,Q)_{n}$. The $*$-strong limits of each of these sequences is a projection $P_{\Omega}$ with range $\Omega:=\Ran P \cap \Ran Q$. In particular $\Omega$ is complemented.
\end{prop}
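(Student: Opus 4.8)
The plan is to deduce the $*$-strong Cauchyness of all the sequences, extract their limits by completeness, and then identify the common limit with the projection onto $\Omega$ by an order argument applied to the \emph{self-adjoint} sequence $(PQP)^n$. First I would record that $((PQ)^n)^*=(QP)^n$, so that a sequence and its adjoint sequence are $*$-strongly Cauchy simultaneously; hence $(QP)^n$ is $*$-strongly Cauchy as soon as $(PQ)^n$ is. Lemma \ref{reductionlemma} then upgrades this to the $*$-strong Cauchyness of $(PQP)^n$, $(QPQ)^n$, $(P,Q)_n$ and $(Q,P)_n$. Since each of these sequences lies in the unit ball of $\End^*_{B}(X)$, Lemma \ref{complete} guarantees that the $*$-strong limits exist; write $R,R',S,T$ for the limits of $(PQ)^n,(QP)^n,(PQP)^n,(QPQ)^n$ respectively.

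Next I would extract the algebraic structure of these limits, using that one-sided multiplication by a fixed adjointable operator, and the adjoint, are $*$-strongly continuous on bounded sets. From $(PQ)^{n+1}=(PQ)^n(PQ)=(PQ)(PQ)^n$ one gets $R=RPQ=PQR$, and from $(PQP)^n=(PQ)^nP$ one gets $S=RP$, whence $R=RPQ=SQ$. The identity $S(PQP)^m=S$ (again by continuity of right multiplication, since $(PQP)^{n+m}\to S$) gives $S^2=S$, and the same device yields $R^2=R$ and $T^2=T$. Moreover $S$ and $T$, being $*$-strong limits of self-adjoint operators, are self-adjoint, and $R'=R^*$. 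At this stage the one genuine subtlety appears: $R$ is not visibly self-adjoint, so I cannot yet conclude that it is a projection with range $\Omega$.

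The crux is to identify $S$ with $P_{\Omega}$. Writing $A=PQP=(QP)^*(QP)$ we have $0\le A\le 1$, and since $A^n-A^{n+1}=A^n(1-A)\ge 0$ the sequence $A^n$ is decreasing; passing to the limit gives $S\le A=PQP\le P$. Consequently $Sx=x$ for every $x\in\Omega$ (as $(PQP)^nx=x$ there), so $\Omega\subseteq\Ran S$. For the reverse inclusion take $y\in\Ran S$; then $y=Sy$, the squeeze coming from $S\le P$ forces $Py=y$, and $S\le PQP$ gives $\langle y,y\rangle=\langle y,Sy\rangle\le\langle y,PQPy\rangle=\langle Qy,Qy\rangle\le\langle y,y\rangle$. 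Equality throughout yields $\langle(1-Q)y,(1-Q)y\rangle=0$, hence $Qy=y$ and $y\in\Omega$. Thus $\Ran S=\Omega$, and since $S=S^2=S^*$ we conclude $S=P_{\Omega}$; in particular $\Omega$ is complemented.

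Finally I would close the loop. As $\Omega=\Ran S\subseteq\Ran Q$ we have $QS=S$, and taking adjoints $SQ=S$; therefore $R=SQ=S=P_{\Omega}$, which retroactively shows $R=R^*$. The same manipulations give $R'=R^*=S$ and $T=QSQ=S$, so all four limits equal $P_{\Omega}$, and the convergence of $(P,Q)_n$ and $(Q,P)_n$ to $P_{\Omega}$ follows from the even/odd description of these sequences in Lemma \ref{reductionlemma}, their even and odd subsequences converging to $R=P_{\Omega}$ and $T=P_{\Omega}$ (respectively $R'=P_{\Omega}$ and $S=P_{\Omega}$). The main obstacle throughout is the self-adjointness of $R$, which the order argument on the self-adjoint sequence $(PQP)^n$ circumvents by first pinning down $S$ and only then deducing $R=S$.
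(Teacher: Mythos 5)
Your proposal is correct and follows essentially the same route as the paper: reduce to the self-adjoint sequence $(PQP)^{n}$ via Lemma \ref{reductionlemma}, invoke completeness on bounded sets (Lemma \ref{complete}), verify idempotency of the limit from the identity $S(PQP)^{m}=S$, and identify the range with $\Omega$ by squeezing in the inner product using $\langle Px,Px\rangle\leq\langle x,x\rangle$. The only cosmetic difference is that you derive the squeeze from the operator inequalities $S\leq PQP\leq P$ (via monotonicity of $A^{n}$ for $0\leq A\leq 1$), whereas the paper uses the algebraic identity $PQ\tilde{P}=\tilde{P}$ directly; both yield $Q\tilde{P}=\tilde{P}$ in the same way.
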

\begin{proof} 
Since $((PQ)^{n})^{*}=(QP)^{n}$, the first statement follows from Lemma \ref{reductionlemma}. 
We will prove that $s-\lim_{n\to\infty} (PQP)^n=s-\lim_{n\to \infty} (QPQ)^{n}$ and that this operator is a projection $P_{\Omega}$ with range $\Omega$. It then follows that $\Omega$ is complemented and that 
\[
P_{\Omega}=s-\lim_{n\to\infty}(P,Q)_{n}=s-\lim_{n\to\infty}(Q,P)_{n},
\]
since $(PQP)^{n}$ is a subsequence of $(Q,P)_{n}$ and $(QPQ)^{n}$ is a subsequence of $(P,Q)_{n}$. Then $(PQ)^{n}$ and $(QP)^{n}$ are subsequences of $(P,Q)_{n}$ and $(Q,P)_{n}$, respectively it follows that
\[
P_{\Omega}=s-\lim_{n\to\infty}(PQ)^{n}=s-\lim_{n\to\infty}(QP)^{n},
\]
as well.

By Lemma \ref{complete} the $*$-strong limit $\tilde{P}:=\lim (PQP)^{n}$ exists, is self-adjoint and $\|\tilde{P}\|\leq 1$. To prove that $\tilde{P}$ is a projection let $x\in X$ and $\varepsilon >0$. Choose $N$ such that for all $k\geq N$ we have
\[
\|\tilde{P}x-(PQP)^{k}x\|<\varepsilon.
\]
Now consider
\begin{align*}
\|\tilde{P}^{2}x-\tilde{P}x\|
&=\|\tilde{P}(PQP)^{k}x-\tilde{P}x\|+ \|\tilde{P}(\tilde{P}-(PQP)^{k})x\|\\
&\leq \|\tilde{P}(PQP)^{k}x-\tilde{P}x\|+ \|(\tilde{P}-(PQP)^{k})x\|\\
&< \|\tilde{P}(PQP)^{k}x-\tilde{P}x\|+ \varepsilon\\
&=\lim_{n\to\infty}\|(PQP)^{n+k}x-\tilde{P}x\|+\varepsilon=\varepsilon,
\end{align*}
and as $\varepsilon$ was arbitrary, it follows that $\tilde{P}^{2}x=\tilde{P}x$.

To prove that $\Ran\tilde{P}=\Omega$, first observe that if $x\in \Omega$ then $$x=Px=Qx=PQPx,$$ so $\tilde{P}x=x$ and $\Omega\subset \Ran \tilde{P}$. 

For the reverse inclusion we will show that $\tilde{P}=P\tilde{P}=Q\tilde{P}$. The equalities 
\[
P\tilde{P}x=\tilde{P}x,\quad \textnormal{and}\quad PQ\tilde{P}x=\tilde{P}x,
\] 
hold by construction. Now for any $x\in X$ we have
\[
\langle Px, Px\rangle\leq \langle x,x\rangle,\quad\langle Qx, Qx\rangle\leq \langle x,x\rangle,
\]
 from which we deduce that
 \[
 \langle\tilde{P}x,\tilde{P}x\rangle=\langle PQ\tilde{P}x,PQ\tilde{P}x\rangle\leq \langle Q\tilde{P}x,Q\tilde{P}x\rangle\leq  \langle\tilde{P}x,\tilde{P}x\rangle.
 \]
Therefore $\langle Q\tilde{P}x,Q\tilde{P}x\rangle= \langle\tilde{P}x,\tilde{P}x\rangle$ and $\langle (1-Q)\tilde{P}x,(1-Q)\tilde{P}x\rangle=0$. It follows that $(1-Q)\tilde{P}x=0$ so $Q\tilde{P}x=\tilde{P}x$. This shows that $Q\tilde{P}=\tilde{P}$ and thus $\Ran \tilde{P}\subset\Omega$. Therefore $\Omega$ is complemented and $P_{\Omega}=\tilde{P}=s-\lim (PQP)^{n}$ in the $*$-strong module topology. By exhanging the r\^oles of $P$ and $Q$, we find that $P_{\Omega}=s-\lim (QPQ)^{n}$ as well.
\end{proof}
In order to address the appropriate converse to Proposition \ref{Cauchy}, we need a description of the Banach space dual $X^{*}:=\mathbb{B}(X,\mathbb{C})$ of bounded linear functionals on a Hilbert $C^{*}$-module $X$. To this end we first recall the \emph{dual or conjugate $C^{*}$-module}. 

The space of compact operators $\mathbb{K}(X,B)$ from $X$ to $B$ is a left $B$-module via $(b\cdot K)(x):=bK(x)$ and carries a natural left $B\simeq \mathbb{K}(B,B)$ valued inner product $\langle K, L\rangle:=KL^{*}$. The \emph{conjugate module} $\overline{X}$ is defined to be the set $X$ with the conjugate $\mathbb{C}$-vector space structure, and we write elements of $\overline{X}$ as $\overline{x}$ with $x\in X$. The left $B$-module structure and inner product
\[b\cdot \overline{x}:=\overline{xb^{*}},\quad \langle \overline{x},\overline{y}\rangle:=\langle x,y\rangle.\]
These left Hilbert $C^{*}$-modules over $B$ are isomorphic, by the following well-known theorem \cite[page 13]{Lance}.
\begin{prop}[Riesz-Fr\'echet theorem for Hilbert $C^{*}$-modules]
The map 
\[
T:\overline{X}\rightarrow \mathbb{K}(X,B),\quad \overline{x}\mapsto T_{x},\quad T_{x}(y):=\langle x,y\rangle, \quad x,y\in X, 
\]
is a unitary isomorphism of left Hilbert $C^{*}$-modules over $B$.
\end{prop}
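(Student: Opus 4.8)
The plan is to verify directly that $T$ is a $\mathbb{C}$-linear, left $B$-linear, inner-product preserving bijection, since such a map is automatically a unitary isomorphism of left Hilbert $C^{*}$-modules. Throughout I use the conventions that $\langle\cdot,\cdot\rangle$ on $X$ is conjugate-linear in the first variable with $\langle x,yb\rangle=\langle x,y\rangle b$, and that $B$ carries the inner product $\langle a,b\rangle_{B}=a^{*}b$. The first step is to show that each $T_{x}$ is adjointable, with adjoint $T_{x}^{*}\colon B\to X$ given by $T_{x}^{*}(b)=xb$. This is a one-line computation: for $b\in B$ and $y\in X$ one has $\langle T_{x}^{*}b,y\rangle=b^{*}\langle x,y\rangle=\langle xb,y\rangle$, matching $\langle b,T_{x}y\rangle_{B}=b^{*}\langle x,y\rangle$. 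In particular $T_{x}$ is bounded.

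Next, the two linearity statements and the inner-product identity are formal. Because $\overline{X}$ carries the conjugate complex structure and $\langle\cdot,\cdot\rangle$ is conjugate-linear in the first slot, $T(\lambda\cdot\overline{x})=\lambda T_{x}$; and since $\langle xb^{*},y\rangle=b\langle x,y\rangle$, the left module structure $b\cdot\overline{x}=\overline{xb^{*}}$ is sent to $b\cdot T_{x}$, so $T$ is left $B$-linear. For the inner products, using $T_{y}^{*}(b)=yb$ I compute $T_{x}T_{y}^{*}(b)=\langle x,y\rangle b$, so $T_{x}T_{y}^{*}$ is left multiplication by $\langle x,y\rangle$; under the identification $\mathbb{K}(B,B)\simeq B$ this reads $\langle T_{x},T_{y}\rangle=\langle x,y\rangle=\langle\overline{x},\overline{y}\rangle$. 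Hence $T$ preserves inner products, and in particular $\|T_{x}\|=\|\overline{x}\|$, so $T$ is isometric with closed range.

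It remains to check that $T$ lands in $\mathbb{K}(X,B)$ and is onto. The key computation is that the generators $\theta_{b,z}$ of $\mathbb{K}(X,B)$, defined by $\theta_{b,z}(y)=b\langle z,y\rangle$, satisfy $\theta_{b,z}=T_{zb^{*}}=T(b\cdot\overline{z})$, since $b\langle z,y\rangle=\langle zb^{*},y\rangle$. Taking an approximate unit $(e_{\lambda})$ for $B$ and using $x=\lim_{\lambda}xe_{\lambda}$ together with the isometry of $T$ gives $T_{x}=\lim_{\lambda}\theta_{e_{\lambda},x}$ in norm, which shows simultaneously that $T_{x}\in\mathbb{K}(X,B)$, so that $T$ is well defined into $\mathbb{K}(X,B)$, and that the range of $T$ contains every generator. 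Since $T$ has closed range and its range is dense, containing the $\theta_{b,z}$ whose closed span is $\mathbb{K}(X,B)$ by definition, $T$ is surjective. A surjective, inner-product preserving, $B$-linear map is unitary, which completes the argument.

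I expect the only genuinely technical point to be the bookkeeping in the last paragraph: matching the left and right module structures and the complex conjugations so that $T_{x}=\lim_{\lambda}\theta_{e_{\lambda},x}$ is literally a norm limit inside $\mathbb{K}(X,B)$, and confirming $\|xe_{\lambda}-x\|\to 0$ from $\|e_{\lambda}\langle x,x\rangle e_{\lambda}-\langle x,x\rangle\|\to 0$. Everything else is a direct unwinding of the definitions of $\overline{X}$ and of the inner product $\langle K,L\rangle=KL^{*}$ on $\mathbb{K}(X,B)$.
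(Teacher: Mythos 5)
Your argument is correct and complete. The paper itself gives no proof of this proposition --- it is stated as a well-known fact with a citation to Lance's book --- and your direct verification (adjointability of $T_{x}$ with $T_{x}^{*}(b)=xb$, the bookkeeping showing $T$ is $\mathbb{C}$- and left $B$-linear and satisfies $T_{x}T_{y}^{*}=L_{\langle x,y\rangle}$, the identity $\theta_{b,z}=T_{zb^{*}}$, and the approximate-unit argument giving both $T_{x}\in\mathbb{K}(X,B)$ and surjectivity of the closed isometric range) is exactly the standard proof one finds in the cited reference. No gaps.
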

The dual Banach space of the $C^{*}$-algebra $B$, $B^{*}:=\mathbb{B}(B,\mathbb{C})$, is a right Banach $B$-module via
\[
(\varphi\cdot b) (a):=\varphi(ba),\quad a,b\in B.
\]
Lastly, for a right Banach $B$-module $V$ and a left Banach $B$-module $W$, we denote by $V\widehat{\otimes}_{B}W$ the balanced Banach space projective tensor product of $V$ and $W$. We are now ready to recall a result of Schweizer, \cite[Proposition 3.1]{Schweizer},  giving a complete description of the dual Banach space $X^{*}$ of the module $X$.
\begin{prop}\label{Schweizer} 
Let $X$ be a Hilbert $C^{*}$-module, $\overline{X}:=\mathbb{K}(X,B)$ the conjugate module and $X^{*}=\mathbb{B}(X,\mathbb{C})$ the dual Banach space of $X$. The  map $\psi:B^{*}{\otimes}_{B}^{\textnormal{alg}}\overline{X}\to X^{*}$ given by
\[
\psi(\phi\otimes\overline{y})(x):=\phi(\langle y,x\rangle), \quad \phi\in B^{*},\quad x,y\in X,
\]
extends to an isometric isomorphism $B^{*}\widehat{\otimes}_{B}\overline{X}\to X^{*}$ of Banach spaces. 
\end{prop}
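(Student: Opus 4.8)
The plan is to dispose of the elementary half first and then to isolate the two substantial points, surjectivity and the sharp lower norm bound, the latter being the real content.

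\emph{Reduction and the easy half.} Using the Riesz--Fr\'echet theorem above to identify $\overline{X}$ with $\mathbb{K}(X,B)$ via $\overline{y}\leftrightarrow T_{y}$, the map $\psi$ becomes $\phi\otimes\overline{y}\mapsto \phi\circ T_{y}$, so the assertion is that every $f\in X^{*}$ is, inside the projective completion, a sum of compositions $X\xrightarrow{K}B\xrightarrow{\phi}\mathbb{C}$. That $\psi$ is $B$-balanced is the identity $(\phi\cdot b)(\langle y,x\rangle)=\phi(b\langle y,x\rangle)=\phi(\langle yb^{*},x\rangle)$, using $b\cdot\overline{y}=\overline{yb^{*}}$ and $\langle yb^{*},x\rangle=b\langle y,x\rangle$. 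Contractivity is Cauchy--Schwarz: $|\phi(\langle y,x\rangle)|\le\|\phi\|\,\|\langle y,x\rangle\|\le\|\phi\|\,\|y\|\,\|x\|$, so $\|\psi(\phi\otimes\overline{y})\|_{X^{*}}\le\|\phi\|\,\|\overline{y}\|$, and the universal property of $\widehat{\otimes}_{B}$ yields a contraction $\widehat{\psi}\colon B^{*}\widehat{\otimes}_{B}\overline{X}\to X^{*}$. It remains to show $\widehat{\psi}$ is onto and isometric.

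\emph{Surjectivity.} Here I would exploit the localisation picture already in play. Given $f\in X^{*}$, form the linking algebra $L=\twobytwo{\mathbb{K}(X)}{X}{\overline{X}}{B}$, regard $X$ as the upper off-diagonal corner, and extend $f$ to $F\in L^{*}$ with $\|F\|=\|f\|$ by Hahn--Banach. Writing $F(\cdot)=\langle\pi_{F}(\cdot)\xi,\eta\rangle$ through the GNS construction of $|F|$ together with a polar decomposition, and cutting by the two corner projections, presents $\eta$ as a vector of a localisation $X\otimes_{B}H$ and $\xi$ as a vector of the GNS space $H$ of the state induced by $F$ on $B$. Approximating $\eta$ in $X\otimes_{B}H$ by finite sums $\sum_{j}y_{j}\otimes\zeta_{j}$ turns $f(x)=\langle(x\otimes 1)\xi,\eta\rangle$ into $\sum_{j}\phi_{j}(\langle y_{j},x\rangle)$ with $\phi_{j}:=\langle\zeta_{j},\pi(\cdot)\xi\rangle\in B^{*}$; the delicate point is that these approximations must be arranged to converge in the \emph{projective} norm, so that they assemble to a genuine $u\in B^{*}\widehat{\otimes}_{B}\overline{X}$ with $\widehat{\psi}(u)=f$.

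\emph{Isometry, the main obstacle.} Contractivity gives $\|\widehat{\psi}(u)\|\le\|u\|$; the hard inequality is $\|u\|\le\|\widehat{\psi}(u)\|$, equivalently that the balancing relations of $\widehat{\otimes}_{B}$ admit no norm-reducing cancellation. I would obtain it by dualising. A bounded $B$-balanced bilinear form on $B^{*}\times\overline{X}$ is the same as a bounded left $B$-module map $\overline{X}\to B^{**}$, so $(B^{*}\widehat{\otimes}_{B}\overline{X})^{*}\cong{}_{B}\mathbb{B}(\overline{X},B^{**})$; under this identification the evaluation functional $\Lambda_{x}(u):=\widehat{\psi}(u)(x)$ corresponds to the module map $\overline{y}\mapsto\langle y,x\rangle$, that is, to $x$ itself. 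The key input is then a self-duality theorem of Paschke type: ${}_{B}\mathbb{B}(\overline{X},B^{**})$ is identified with the self-dual completion of $X$, inside whose unit ball $\{x\in X:\|x\|\le 1\}$ is weak-$*$ dense. Consequently $\|u\|=\sup_{\|\beta\|\le 1}|\beta(u)|=\sup_{\|x\|\le 1}|\Lambda_{x}(u)|=\sup_{\|x\|\le 1}|\widehat{\psi}(u)(x)|=\|\widehat{\psi}(u)\|_{X^{*}}$, so $\widehat{\psi}$ is isometric and in particular injective. Together with surjectivity this exhibits $\widehat{\psi}$ as an isometric isomorphism. The crux of the whole argument is exactly this dual computation combined with the weak-$*$ density, since it is what forces the projective norm down to the functional norm.
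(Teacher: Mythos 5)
The paper does not actually prove this proposition: it is recalled verbatim from Schweizer \cite[Proposition 3.1]{Schweizer}, so there is no in-paper argument to compare yours against. Judged on its own, your outline is sound and is in fact close in spirit to the published proof: the contractivity and balancedness computations are correct, and the decisive step --- identifying $(B^{*}\widehat{\otimes}_{B}\overline{X})^{*}$ isometrically with the space of balanced forms, hence with ${}_{B}\mathbb{B}(\overline{X},B^{**})$, recognising this as the self-dual completion of $X\otimes_{B}B^{**}$, and invoking Paschke's theorem that the unit ball of $X$ is dense there in the relevant weak topology --- is exactly the right mechanism for forcing the projective norm down to the functional norm. You should, however, state the Paschke inputs precisely (the identification is with the self-dual completion of $X\otimes_{B}B^{**}$ over the von Neumann algebra $B^{**}$, and the density you need is for the topology generated by the seminorms $T\mapsto|T(\overline{y})(\phi)|$ with $y\in X$, $\phi\in B^{*}$); as written these are asserted rather than justified, and they carry essentially all the weight.

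The one genuine gap is in your surjectivity paragraph. The linking-algebra/GNS argument, once the degenerate part of $\pi(p)H$ is cut away so that $\pi(p)H\cong X\otimes_{B}\pi(q)H$, produces functionals $f_{n}=\psi(u_{n})$ with $u_{n}\in B^{*}\otimes^{\textnormal{alg}}_{B}\overline{X}$ and $\|f_{n}-f\|_{X^{*}}\to 0$; that is, it proves only that the range of $\widehat{\psi}$ is \emph{norm dense} in $X^{*}$. The ``delicate point'' you flag --- arranging the $u_{n}$ to be Cauchy in the projective norm --- is never resolved, and attacking it directly is the wrong move. The fix is to reorder the argument: prove the isometry first, observe that an isometry of Banach spaces has closed range, and conclude surjectivity from density. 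With that reorganisation (and the Paschke citations made explicit) the proof closes.
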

For a Banach space $W$, the \emph{weak topology} on $W$ is the locally convex topology defined by the seminorms $\|w\|_{\varphi}:=\|\varphi(w)\|$. In general the weak topology is \emph{not} complete, that is, weak Cauchy sequences need not have a weak limit in $X$. However, we do have the following fundamental result for weakly convergent sequences. 
\begin{thm}[{\cite[Chap II, Section 38]{RSz}}]
\label{Mazur}
Let $W$ be a Banach space and $C\subset W$ a convex set. Then the weak closure of $C$ coincides with the norm closure of $C$. In particular, if $w_j\to w$ in the weak topology, then there exists a sequence of convex combinations $y_j:=\sum_{k=j}^{n_{j}} t_j w_j$ such that $\|y_j-w\|\to 0$.
\end{thm}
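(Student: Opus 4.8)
The plan is to establish the two inclusions between the weak closure and the norm closure of $C$ separately, only one of which requires convexity. First I would record the trivial inclusion (norm closure) $\subseteq$ (weak closure), valid for any set: since every functional $\varphi\in W^{*}$ is norm continuous, the weak topology is coarser than the norm topology, so every weakly closed set is norm closed. The weak closure of $C$ is weakly closed and contains $C$, hence is a norm-closed set containing $C$, and therefore contains the norm closure of $C$.

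For the reverse inclusion (weak closure) $\subseteq$ (norm closure), which is where convexity is used, I would argue by contraposition via the Hahn--Banach separation theorem. The norm closure of a convex set is itself convex and norm closed. Given a point $x$ not in the norm closure of $C$, separating $x$ from this closed convex set produces a functional $\varphi\in W^{*}$ and a real number $\alpha$ with $\operatorname{Re}\varphi(x)>\alpha\geq\operatorname{Re}\varphi(c)$ for all $c\in C$ (passing to real parts in the complex case). Then $\{w\in W:\operatorname{Re}\varphi(w)>\alpha\}$ is a weakly open neighbourhood of $x$ disjoint from $C$, so $x$ does not lie in the weak closure of $C$. Combining the two inclusions yields the asserted equality of closures.

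For the consequence on weakly convergent sequences, I would apply this equality to the convex hulls of the tails of the sequence. If $w_{j}\to w$ weakly, then for each fixed $j$ the limit $w$ belongs to the weak closure of the convex set $C_{j}:=\operatorname{conv}\{w_{k}:k\geq j\}$, being a weak limit of elements of $C_{j}$. By the equality just proved, $w$ lies in the norm closure of $C_{j}$, so there is a convex combination $y_{j}$ of the vectors $w_{k}$ with $k\geq j$ satisfying $\|y_{j}-w\|<1/j$. Letting $j\to\infty$ produces the required norm-convergent sequence of convex combinations.

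The Hahn--Banach separation step is the only genuine obstacle: it is precisely where convexity becomes indispensable, and in the complex setting one must take care to separate using the real part of a complex-linear functional. Everything else follows softly from the comparison between the weak and norm topologies.
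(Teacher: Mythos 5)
Your argument is correct and complete: the paper itself gives no proof of this theorem, citing it to Riesz--Sz.-Nagy, and what you supply is the standard proof of Mazur's theorem (the trivial inclusion from coarseness of the weak topology, Hahn--Banach separation of a point from the norm-closed convex hull for the reverse inclusion, and application to the convex hulls of the tails $\operatorname{conv}\{w_k : k\geq j\}$ for the sequence statement). One cosmetic point: your construction implicitly corrects the index slip in the statement as printed --- the convex combinations should read $y_j=\sum_{k=j}^{n_j} t_k w_k$ rather than $\sum_{k=j}^{n_j} t_j w_j$ --- which is exactly how the result is used later in the proof of Theorem \ref{complementedalternating}.
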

In the sequel we will freely use the following computational tool.
\begin{lemma}
\label{powers} 
Let $P,Q\in\End^{*}_{B}(X)$ be projections such that $\Omega:=\Ran P\cap \Ran Q$ is complemented. Then for all $k\geq 1$ we have
\begin{align*}
(PQ-P_{\Omega})^{k}&=(PQP)^{k}-P_{\Omega},\quad (QP-P_{\Omega})^{k}=(QPQ)^{k}-P_{\Omega},\\
(PQP-P_{\Omega})^{k}&=(PQP)^{k}-P_{\Omega},\quad (QPQ-P_{\Omega})^{k}=(QPQ)^{k}-P_{\Omega}.
\end{align*}
\end{lemma}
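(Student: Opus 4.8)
The plan is to reduce all four identities to a single algebraic input: the projection $P_{\Omega}$ onto $\Omega=\Ran P\cap\Ran Q$ is absorbed by $P$ and by $Q$ from either side. First I would observe that, because $\Omega\subseteq\Ran P$ and $\Omega\subseteq\Ran Q$, every vector in $\Ran P_{\Omega}$ is fixed by $P$ and by $Q$; hence $PP_{\Omega}=P_{\Omega}$ and $QP_{\Omega}=P_{\Omega}$, and taking adjoints (using $P_{\Omega}^{*}=P_{\Omega}$) also $P_{\Omega}P=P_{\Omega}$ and $P_{\Omega}Q=P_{\Omega}$. Together with $P_{\Omega}^{2}=P_{\Omega}$ these are the only properties of $P_{\Omega}$ the argument uses, and they are available precisely because the hypothesis that $\Omega$ is complemented is what guarantees $P_{\Omega}$ exists in $\End^{*}_{B}(X)$.

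The core is a one-line induction. If $T\in\End^{*}_{B}(X)$ satisfies $TP_{\Omega}=P_{\Omega}T=P_{\Omega}$ — which holds for any word in $P$ and $Q$, since each letter absorbs $P_{\Omega}$ and hence $T^{k}P_{\Omega}=P_{\Omega}$ for every $k$ — then
\[
(T-P_{\Omega})^{k+1}=(T^{k}-P_{\Omega})(T-P_{\Omega})=T^{k+1}-T^{k}P_{\Omega}-P_{\Omega}T+P_{\Omega}^{2}=T^{k+1}-P_{\Omega},
\]
so $(T-P_{\Omega})^{k}=T^{k}-P_{\Omega}$ for all $k\geq 1$ by induction on $k$. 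Taking $T=PQP$ and $T=QPQ$ produces the second row of the lemma verbatim, while $T=PQ$ and $T=QP$ give the reductions $(PQ-P_{\Omega})^{k}=(PQ)^{k}-P_{\Omega}$ and $(QP-P_{\Omega})^{k}=(QP)^{k}-P_{\Omega}$.

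The step I expect to require the most care is passing from these two-letter reductions to the three-letter right-hand sides $(PQP)^{k}-P_{\Omega}$ and $(QPQ)^{k}-P_{\Omega}$ recorded in the first row. The bridge is the pair of word identities $(PQP)^{k}=(PQ)^{k}P$ and $(QPQ)^{k}=(QP)^{k}Q$, obtained by collapsing adjacent repeated letters via $P^{2}=P$, $Q^{2}=Q$, combined once more with the absorption $P_{\Omega}P=P_{\Omega}=P_{\Omega}Q$; post-multiplying by the outer projection gives
\[
(PQ-P_{\Omega})^{k}P=(PQ)^{k}P-P_{\Omega}P=(PQP)^{k}-P_{\Omega},\qquad (QP-P_{\Omega})^{k}Q=(QPQ)^{k}-P_{\Omega}.
\]
The delicate book-keeping is exactly this trailing projection: since $P$ and $Q$ are not invertible it cannot simply be cancelled, so the symmetric three-letter blocks $PQP,QPQ$ behave more rigidly than $PQ,QP$. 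It is through this trailing factor — equivalently, through the product identity $(PQ-P_{\Omega})(QP-P_{\Omega})=PQP-P_{\Omega}$, which is what later feeds the norm computation $\|PQ-P_{\Omega}\|^{2}=\|PQP-P_{\Omega}\|$ for the Friedrichs angle — that the three-letter form is reached. Once the absorption relations are established the trailing factor is harmless, since $P$ and $Q$ each fix $\Ran P_{\Omega}$, and all of the identities follow from the single induction above.
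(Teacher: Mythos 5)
Your core argument is correct and is in essence the paper's own proof: both rest on the absorption relations $P_{\Omega}=PP_{\Omega}=P_{\Omega}P=QP_{\Omega}=P_{\Omega}Q$ (available because complementability of $\Omega$ gives $P_{\Omega}\in\End^{*}_{B}(X)$ with $\Ran P_{\Omega}=\Omega\subset \Ran P\cap\Ran Q$), followed by induction on $k$; your single induction for an arbitrary word $T$ in $P$ and $Q$ is a mild streamlining of the paper's recursions $(PQ)^{k+1}-P_{\Omega}=(PQ-P_{\Omega})\bigl((PQ)^{k}-P_{\Omega}\bigr)$ and $(PQP)^{k+1}-P_{\Omega}=P\bigl((QP)^{k+1}-P_{\Omega}\bigr)$.

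The point you circled around without naming should be made explicit: the first row of the lemma \emph{as printed} is false. At $k=1$ it asserts $PQ-P_{\Omega}=PQP-P_{\Omega}$, i.e.\ $PQ=PQP$, which fails for a generic pair of projections on $\mathbb{C}^{2}$; indeed it can hold for no $k$ in general, since $(PQ-P_{\Omega})^{k}$ need not be self-adjoint while $(PQP)^{k}-P_{\Omega}$ always is. What is true is what your induction with $T=PQ$ and $T=QP$ delivers, namely $(PQ-P_{\Omega})^{k}=(PQ)^{k}-P_{\Omega}$ and $(QP-P_{\Omega})^{k}=(QP)^{k}-P_{\Omega}$; your trailing-projection identities $(PQ-P_{\Omega})^{k}P=(PQP)^{k}-P_{\Omega}$ and $(QP-P_{\Omega})^{k}Q=(QPQ)^{k}-P_{\Omega}$ are the closest true relatives of the printed line, and you are right that the trailing factor genuinely cannot be cancelled. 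This corrected first row is exactly what the paper's own proof establishes (its base case ``the statement holds for $k=1$'' only makes sense for it) and exactly the form invoked later: Proposition \ref{independent} uses $\|(PQ)^{n}-P_{\Omega}\|=\|(PQ-P_{\Omega})^{n}\|$, and step $3.\Rightarrow 4.$ of Proposition \ref{prop: closed range} uses $\sum_{n}(PQ-P_{\Omega})^{n}=P_{\Omega}+\sum_{n}\bigl((PQ)^{n}-P_{\Omega}\bigr)$, while the (correctly printed) second row feeds step $2.\Rightarrow 3$. So the $(PQP)^{k}$ and $(QPQ)^{k}$ on the right of the first row are typos for $(PQ)^{k}$ and $(QP)^{k}$, and your closing claim that ``all of the identities follow from the single induction'' is accurate only for the corrected statement --- no bridge can reach the printed one. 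Your auxiliary identity $(PQ-P_{\Omega})(QP-P_{\Omega})=PQP-P_{\Omega}$ and the norm consequence $\|PQ-P_{\Omega}\|^{2}=\|PQP-P_{\Omega}\|$ are also correct, matching (up to swapping $AA^{*}$ for $A^{*}A$) the computation in the paper's proof of Proposition \ref{prop: closed range}.
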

\begin{proof}
The statement holds for $k=1$. Since $P_{\Omega}=P_{\Omega}P=PP_{\Omega}=P_{\Omega} Q=QP_{\Omega}$ we have 
\[
(PQ)^{k+1}-P_{\Omega}=(PQ-P_{\Omega})((PQ)^{k}-P_{\Omega}),
\quad (QP)^{k+1}-P_{\Omega}=(QP-P_{\Omega})((QP)^{k}-P_{\Omega}),
\]
and
\[(PQP)^{k+1}-P_{\Omega}=P((QP)^{k+1}-P_{\Omega}),\quad (QPQ)^{k+1}-P_{\Omega}=Q((PQ)^{k+1}-P_{\Omega}),\]
so the result follows by induction on $k$.
\end{proof}
We are now ready to prove our main theorem.

\begin{thm}
\label{complementedalternating} 
Let $M,N$ be complemented submodules of a Hilbert $C^{*}$-module $X$. Then $(M,N)$ is a concordant pair if and only if the sequence $(P_{N}P_{M})^{n}$ is Cauchy in the $*$-strong module topology on $\End^{*}_{B}(X)$.
\end{thm}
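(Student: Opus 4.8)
The plan is to prove the two implications separately, using the local--global principle for the direction that a Cauchy sequence forces concordance, and a monotonicity argument combined with Mazur's theorem for the converse. Throughout write $P=P_{M}$, $Q=P_{N}$ and $\Omega=M\cap N$.

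For the implication ``$(P_{N}P_{M})^{n}$ Cauchy $\Rightarrow$ concordant'', I would first apply Proposition~\ref{Cauchy}: the $*$-strong limit of $(QP)^{n}$ is a projection $P_{\Omega}$ with range $\Omega$, so in particular $\Omega$ is complemented. I would then pass to localisations. For any representation $\pi$ the map $\widehat{\pi}$ sends the $*$-strong module topology (on bounded sets) to the strong topology on $\mathbb{B}(X_{\pi})$, as one checks on elementary tensors $x\otimes h$ and extends by uniform boundedness; hence $(P_{N_{\pi}}P_{M_{\pi}})^{n}=\widehat{\pi}((QP)^{n})$ converges strongly to $\widehat{\pi}(P_{\Omega})=P_{(M\cap N)_{\pi}}$. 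On the other hand Theorem~\ref{vN} gives $(P_{N_{\pi}}P_{M_{\pi}})^{n}\to P_{M_{\pi}\cap N_{\pi}}$ strongly. Uniqueness of strong limits yields $M_{\pi}\cap N_{\pi}=(M\cap N)_{\pi}$ for every $\pi$, and Corollary~\ref{locharmcor} then gives that $(M,N)$ is concordant.

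For the converse, assume $(M,N)$ concordant, so $P_{\Omega}$ exists. By Lemma~\ref{reductionlemma} it suffices to prove that the self-adjoint sequence $(QPQ)^{n}$ is $*$-strongly Cauchy, and by Lemma~\ref{powers} we have $(QPQ)^{n}-P_{\Omega}=R^{n}$ with $R:=QPQ-P_{\Omega}$. A direct computation using $P_{\Omega}QPQ=QPQP_{\Omega}=P_{\Omega}$ shows $R=(1-P_{\Omega})QPQ(1-P_{\Omega})$, so that $0\leq R\leq 1$. The first step is weak convergence of $R^{n}x$ to $0$ for each $x$: applying Corollary~\ref{locharmcor} and Theorem~\ref{vN} in the GNS representation $\pi_{\phi}$ of a state $\phi$ with cyclic vector $\xi_{\phi}$, one has $\widehat{\pi_{\phi}}((QPQ)^{n})\to\widehat{\pi_{\phi}}(P_{\Omega})$ strongly, and since $\phi(\langle y,R^{n}x\rangle)=\langle y\otimes\xi_{\phi},[\widehat{\pi_{\phi}}((QPQ)^{n})-\widehat{\pi_{\phi}}(P_{\Omega})](x\otimes\xi_{\phi})\rangle$, this tends to $0$ for all $y\in X$ and all states $\phi$, hence for all $\phi\in B^{*}$. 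By Proposition~\ref{Schweizer} and boundedness of $R^{n}x$, this is precisely weak convergence of $R^{n}x$ to $0$ in $X$.

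The main obstacle is to upgrade this weak (equivalently, representation-wise) convergence to norm convergence in $X$, since the local convergence need not be uniform in $\pi$. Here I would use monotonicity. As $0\leq R\leq 1$, the positive elements $d_{n}:=\langle R^{n}x,R^{n}x\rangle=\langle x,R^{2n}x\rangle$ satisfy $d_{n+1}\leq d_{n}$, so $\|d_{n}\|$ decreases to some $\delta\geq 0$. Applying Mazur's theorem (Theorem~\ref{Mazur}) to the weakly null sequence $(R^{n}x)$ produces norm-null convex combinations $y_{j}=\sum_{k=j}^{n_{j}}t_{k}R^{k}x$. The decisive point is that for $k\leq n_{j}$ one has $\langle R^{n_{j}}x,R^{k}x\rangle=\langle x,R^{n_{j}+k}x\rangle\geq\langle x,R^{2n_{j}}x\rangle=d_{n_{j}}$ in $B$, because $R^{n_{j}+k}-R^{2n_{j}}=R^{n_{j}+k}(1-R^{n_{j}-k})\geq 0$; summing against the weights $t_{k}$ gives $d_{n_{j}}\leq\langle R^{n_{j}}x,y_{j}\rangle$, and Cauchy--Schwarz yields $\|d_{n_{j}}\|\leq\|R^{n_{j}}x\|\,\|y_{j}\|=\|d_{n_{j}}\|^{1/2}\|y_{j}\|$, i.e. $\|d_{n_{j}}\|^{1/2}\leq\|y_{j}\|\to 0$. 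Since $\|d_{n}\|$ is decreasing, $\delta=0$, so $R^{n}x\to 0$ in norm. Thus $(QPQ)^{n}\to P_{\Omega}$ $*$-strongly and Lemma~\ref{reductionlemma} shows that $(P_{N}P_{M})^{n}$ is $*$-strongly Cauchy.
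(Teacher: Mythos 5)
Your proposal is correct and follows essentially the same route as the paper's proof: the reverse direction via Proposition~\ref{Cauchy}, localisation, Theorem~\ref{vN} and the local--global characterisation of concordance, and the forward direction via Lemma~\ref{reductionlemma}, weak convergence through Proposition~\ref{Schweizer} and GNS representations, Mazur's theorem, and the monotonicity of the powers (Lemma~\ref{powers}). The only cosmetic differences are that you package the final step as a Cauchy--Schwarz estimate $\|d_{n_j}\|^{1/2}\leq\|y_j\|$ for $R=QPQ-P_{\Omega}$ where the paper bounds $\langle (y_k-P_{\Omega}x),(y_k-P_{\Omega}x)\rangle$ from below directly, and that you should add the one-line remark that the symmetric argument (exchanging $P$ and $Q$) handles $(PQP)^{n}$, since Lemma~\ref{reductionlemma} requires both $(PQP)^{n}$ and $(QPQ)^{n}$ to be $*$-strongly Cauchy.
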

\begin{proof} 
We write $P=P_{M}$, $Q=P_{N}$ and $\Omega:=M\cap N$.

$\Leftarrow$ In Proposition \ref{Cauchy} it was proved that $\Omega$ is complemented and $\lim (PQ)^{n}x=P_{\Omega}x$. Now if $\pi:B\to\mathbb{B}(H_{\pi})$ is an irreducible representation then
\begin{align*}
P_{M_{\pi}\cap N_{\pi}}(x\otimes h)&=\lim_{n\to\infty}(P_{M_{\pi}}P_{N_{\pi}})^{n}(x\otimes h)=\lim_{n\to\infty}\widehat{\pi}(P_{M}P_{N})^{n}(x\otimes h)\\
&=\lim_{n\to\infty}((PQ)^{n}x)\otimes h=P_{\Omega}x\otimes h=\widehat{\pi}(P_{\Omega})(x\otimes h),
\end{align*}
so $P_{M_{\pi}\cap N_{\pi}}=\widehat{\pi}(P_{\Omega})$ and thus $\Omega_{\pi}=M_{\pi}\cap N_{\pi}$, so $(M,N)$ is concordant by Theorem \ref{locharm}.

For the converse, assume that $(M,N)$ is concordant and write $P_{\Omega}$ for the projection onto $\Omega$. By Lemma \ref{reductionlemma} it suffices to prove that $(PQP)^{n}x\to P_{\Omega} x$ and $(QPQ)^{n}x\to P_{\Omega} x$ for all $x\in X$. 

We first prove that $(PQP)^{n}x$ converges to $P_{\Omega}x$ in the weak topology on $X$. To this end observe that since $\|(PQP)^{n}\|\leq \|PQP\|^{n}\leq 1$ the sequence $(PQP)^{n}x$ is bounded in norm. Therefore, by Proposition \ref{Schweizer}  it suffices to show that $(\phi\otimes \overline{y}) ( (PQP)^{n}x)\to (\phi\otimes \overline{y})(P_{\Omega}x)$ for all $\phi\in B^{*}$ and $y\in X$, as such functionals generate the weak topology. Since every functional on the $C^{*}$-algebra $B$ is a linear combination of four states (see \cite{T}), we may restrict ourselves to states $\sigma\in B^{*}$. In the universal representation $H_{u}$ of $B$, every state $\sigma$ arises as a vector state associated to a unit vector $h_{\sigma}\in H_{u}$. Denote by $\pi_{\sigma}$ the GNS-representation associated to the state $\sigma$. Then by Theorem \ref{vN} we find
\begin{align*}
(\sigma\otimes \overline{y})((PQP)^{n}x)&=\sigma(\langle y, (PQP^{n})x\rangle )=\langle h_{\sigma}, \langle y, (PQP)^{n}x\rangle h_{\sigma}\rangle\\
&=\langle y\otimes h_{\sigma}, (PQP)^{n}x\otimes h_{\sigma}\rangle \to \langle y\otimes h_{\sigma}, P_{\Omega_{\sigma}} (x\otimes h_{\sigma})\rangle.
\end{align*}
By Corollary \ref{locharmcor}, $M_{\pi_\sigma}\cap N_{\pi_\sigma}=\Omega_{\pi_\sigma}$ so $P_{\Omega_{\sigma}}=P_{\Omega}\otimes 1=\widehat{\pi}_{\sigma}(P_{\Omega})$,
and
$(PQP)^{n}x\otimes h_{\sigma}\to P_{\Omega} x\otimes h_{\sigma}$ in the Hilbert space $X\otimes_{B} H_{u}$. Therefore $(PQP)^{n}x\to P_{\Omega}x$ weakly in $X$. 

By Theorem \ref{Mazur}, there is a sequence of convex combinations $y_{k}=\sum_{i=k}^{n_{k}} t_{i}(PQP)^{i}x$ such that $y_k\to P_{\Omega}x$ in norm in $X$. Since for all $n$ we have
\[
P_{\Omega}(PQP)^{n}=(PQP)^{n}P_{\Omega}=P_{\Omega},\quad (PQP)^{m}\leq (PQP)^{n},\quad m\geq n,
\]
we can estimate
\begin{align*}
\langle (y_{k}-P_{\Omega})x,  (y_{k}-P_{\Omega})x\rangle&=
\left\langle \left(\sum_{i=k}^{n_{k}} t_{i}(PQP)^{i}-P_{\Omega}\right)x , \left(\sum_{i=k}^{n_{k}} t_{i}(PQP)^{i}-P_{\Omega}\right)x\right\rangle\\ 
&=\left\langle \left(\sum_{i=k}^{n_{k}} t_{i}(PQP)^{i}-P_{\Omega}\right)^{2}x , x\right\rangle\\
&=\left\langle \left(\sum_{i,j=k}^{n_{k}}t_{i}t_{j}(PQP)^{i+j}-P_{\Omega}\right)x, x\right\rangle \\ %-\langle P_{\Omega}x, P_{\Omega}\rangle
&\geq \left\langle \left(\sum_{i,j=k}^{n_k}t_{i}t_{j}(PQP)^{2n_{k}}-P_{\Omega}\right)x, x\right\rangle \\ %-\langle P_{\Omega}x, P_{\Omega}\rangle
&=\langle ((PQP)^{2n_k}-P_{\Omega})x, x\rangle \\%-\langle P_{\Omega}x, P_{\Omega}\rangle
&=\langle ((PQP)^{n_k}-P_{\Omega})x, ((PQP)^{n_k}-P_{\Omega})x\rangle,
\end{align*}
where the last step follows using Lemma \ref{powers}. 
Therefore it follows that the subsequence $(PQP)^{n_{k}}$ is such that for all $x\in X$  we have norm convergence $(PQP)^{n_{k}}x\to P_{\Omega}x$ as $k\to \infty$. Since for any $m\geq n$ we have 
\[
\langle ((PQP)^{n}-P_{\Omega})x, ((PQP)^{n}-P_{\Omega})x\rangle \geq\langle( (PQP)^{m}-P_{\Omega})x,( (PQP)^{m}-P_{\Omega})x\rangle,
\]
we find that 
\begin{align*}
\|((PQP)^{n}-P_{\Omega})x\|\geq \|((PQP)^{m}-P_{\Omega})x\|.
\end{align*}
Thus it follows that $\lim_{n\to\infty} \|((PQP)^{n}-P_{\Omega})x\|\to 0$. By swapping the r\^oles of $P$ and $Q$ we find that $\lim_{n\to\infty} \|((QPQ)^{n}-P_{\Omega})x\|\to 0$ as well. This completes the proof.
\end{proof}

\section{Angle, sum and intersection}
\label{sec:angle}
We now consider the applications of our main result to various problems concerning pairs of complemented submodules of Hilbert $C^*$-modules.
\subsection{The Friedrichs angle between complemented submodules}
In \cite{Luo}, the following definition for the Friedrichs angle between complemented submodules was given, which we now recall. Let $M,N\subset X$  be complemented submodules such that $M\cap N$ is complemented and write $P_{M},P_{N}$ and $P_{M\cap N}$ respectively for the corresponding projections. The quantity
\begin{equation}
\label{moduleangle}
c(M,N):=\|P_{M}P_{N}(1-P_{M\cap N})\|=\|P_{M}P_{N}-P_{M\cap N}\|,
\end{equation}
is called the (cosine of the) \emph{Friedrichs angle between $M$ and $N$}. 

For the above definition, the existence of the projection $P_{\Omega}$ seems necessary. This is undesirable and ideally the angle should be an invariant associated to any pair $(M,N)$ of complemented submodules. We propose the following generalisation, based on Hilbert space localisation.  
\begin{defn} 
Let $M,N\subset X$  be complemented submodules. Let $\pi:B\to \mathbb{B}(H_{\pi})$ be a representation of $B$ on $H_{\pi}$. The quantity
\begin{equation}\label{localangle}
c_{\pi}(M,N):=c(M_{\pi},N_{\pi})=\|P_{M_{\pi}}P_{N_{\pi}}(1-P_{M_{\pi}\cap N_{\pi}})\|=\|P_{M_{\pi}}P_{N_{\pi}}-P_{M_{\pi}\cap N_{\pi}}\|,
\end{equation}
is called the (cosine of the) \emph{local Friedrichs angle between $M$ and $N$ at $\pi$}. 
\end{defn}

\begin{prop}
\label{independent} Suppose that $\pi:B\to \mathbb{B}(H_{\pi})$ is faithful. Then
\begin{enumerate}
\item If $(M,N)$ is concordant, then $c_{\pi}(M,N)=c(M,N)$;
\item If $(M,N)$ is discordant, then $c_{\pi}(M,N)=1$.
\end{enumerate}
In particular the (cosine of the) local Friedrich angle $c_{\pi}(M,N)$ is independent of the choice of faithful representation $\pi$.
\end{prop}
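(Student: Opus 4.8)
The plan is to reduce the whole statement to two facts. First, for a faithful representation $\pi$ the induced $*$-homomorphism $\widehat{\pi}\colon\End^{*}_{B}(X)\to\mathbb{B}(X_{\pi})$ of \eqref{eq:bob} is isometric. This is immediate: $\widehat{\pi}$ is a $*$-homomorphism of $C^{*}$-algebras, so it is enough to check injectivity, and if $T\otimes 1=0$ then $\pi(\langle y,Tx\rangle)=0$ for all $x,y\in X$, whence $\langle y,Tx\rangle=0$ by faithfulness and $T=0$. Since $\widehat{\pi}(P_{M})=P_{M_{\pi}}$ and $\widehat{\pi}(P_{N})=P_{N_{\pi}}$, the isometry lets me move norms of polynomials in $P_{M},P_{N}$ freely between $X$ and $X_{\pi}$. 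The second fact is the Hilbert space theory of the Friedrichs angle in $X_{\pi}$: the cosine $c(M_{\pi},N_{\pi})$ lies in $[0,1]$, and $c(M_{\pi},N_{\pi})<1$ if and only if the alternating products converge in operator norm to $P_{M_{\pi}\cap N_{\pi}}$.

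For (1), assume $(M,N)$ is concordant, so $\Omega:=M\cap N$ is complemented and $c(M,N)=\|P_{M}P_{N}-P_{\Omega}\|$ is defined. By Corollary \ref{locharmcor}, concordance gives $M_{\pi}\cap N_{\pi}=(M\cap N)_{\pi}=\Omega_{\pi}$ for every representation, in particular for our faithful $\pi$; since $\Omega_{\pi}$ is the range of $\widehat{\pi}(P_{\Omega})=P_{\Omega}\otimes 1$, this says $P_{M_{\pi}\cap N_{\pi}}=\widehat{\pi}(P_{\Omega})$. Using \eqref{localangle} and the isometry of $\widehat{\pi}$,
\[
c_{\pi}(M,N)=\|\widehat{\pi}(P_{M}P_{N})-\widehat{\pi}(P_{\Omega})\|=\|\widehat{\pi}(P_{M}P_{N}-P_{\Omega})\|=\|P_{M}P_{N}-P_{\Omega}\|=c(M,N).
\]

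For (2) I argue by contraposition. Since $c_{\pi}(M,N)\le 1$ always (clear from the first expression in \eqref{localangle}), it suffices to show that $c_{\pi}(M,N)<1$ forces $(M,N)$ to be concordant. Write $P'=P_{M_{\pi}}=\widehat{\pi}(P_{M})$, $Q'=P_{N_{\pi}}=\widehat{\pi}(P_{N})$ and $E=P_{M_{\pi}\cap N_{\pi}}$ in $\mathbb{B}(X_{\pi})$; a short computation gives $\|P'Q'P'-E\|=\|P'Q'-E\|^{2}=c_{\pi}(M,N)^{2}$. If $c_{\pi}(M,N)<1$, then applying Lemma \ref{powers} in the Hilbert space $X_{\pi}$ (where $E$ exists),
\[
(P'Q'P')^{n}-E=(P'Q'P'-E)^{n}
\]
is the $n$-th power of a self-adjoint element of norm $<1$ and hence tends to $0$ in norm. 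Thus $(P'Q'P')^{n}=\widehat{\pi}\big((P_{M}P_{N}P_{M})^{n}\big)$ is norm-Cauchy, and by the isometry of $\widehat{\pi}$ so is $(P_{M}P_{N}P_{M})^{n}$ in $\End^{*}_{B}(X)$, hence it is $*$-strongly Cauchy. Exchanging the roles of $M$ and $N$ gives the same for $(P_{N}P_{M}P_{N})^{n}$, so Lemma \ref{reductionlemma} makes $(P_{N}P_{M})^{n}$ $*$-strongly Cauchy and Theorem \ref{complementedalternating} shows $(M,N)$ is concordant. The main obstacle is exactly this transfer step: one must know that $c_{\pi}<1$ delivers norm (not merely strong) convergence of the alternating products, so that the isometry of $\widehat{\pi}$ can carry Cauchyness back to $X$, where Theorem \ref{complementedalternating} becomes available. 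Finally, combining (1) and (2), for any faithful $\pi$ the value $c_{\pi}(M,N)$ equals $c(M,N)$ in the concordant case and equals $1$ in the discordant case, and neither value depends on $\pi$; this is the asserted independence.
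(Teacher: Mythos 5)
Your proof is correct and follows essentially the same route as the paper: part (1) is verbatim the paper's argument (Corollary \ref{locharmcor} plus isometry of $\widehat{\pi}$), and part (2) argues by contraposition that $c_{\pi}(M,N)<1$ forces norm-Cauchyness of the alternating products, which the isometry pulls back to $\End^{*}_{B}(X)$ so that Theorem \ref{complementedalternating} applies. The only (cosmetic) difference is that you route through $(P_{M}P_{N}P_{M})^{n}$ and Lemma \ref{reductionlemma}, whereas the paper bounds $\|(P_{M}P_{N})^{n}-(P_{M}P_{N})^{m}\|$ directly via the triangle inequality and Lemma \ref{powers}.
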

\begin{proof} Suppose $(M,N)$ is concordant, so that by Corollary \ref{locharmcor}, $M_{\pi}\cap N_{\pi}=(M\cap N)_{\pi}$ and $P_{M_{\pi}\cap N_{\pi}}=\widehat{\pi}(P_{M\cap N})$. 
 Since $\pi$ is faithful, the representation $\End^{*}_{B}(X)\to \mathbb{B}(X_{\pi})$ is faithful and hence isometric. Therefore 
\[c_{\pi}(M,N)=\|\widehat{\pi}(P_{N}P_{M}-P_{M\cap N})\|=\|P_{N}P_{M}-P_{M\cap N}\|=c(M,N),\]
which proves {\em1}.

Clearly $0\leq c_{\pi}(M,N)\leq 1$, so suppose that $c_{\pi}(M,N)<1$ and write $P=P_{M}$ and $Q=P_{N}$. We will show that the sequence $(PQ)^{n}$ is Cauchy for the norm topology. Then by Theorem \ref{complementedalternating}, $(M,N)$ is concordant, which proves {\em2}. So for $m\geq n$ recall the representation $\widehat{\pi}$ from Equation \eqref{eq:bob} and consider
\begin{align*}
\|(PQ)^{n}-(PQ)^{m}\|&=\|\widehat{\pi}((PQ)^{n}-(PQ)^{m})\|\\
&\leq \|\widehat{\pi}(PQ)^{n}-P_{M_{\pi}\cap N_{\pi}}\|+\|\widehat{\pi}(PQ)^{m}-P_{M_{\pi}\cap N_{\pi}}\|\\
&=\|(P_{M_\pi}P_{N_\pi})^{n}-P_{M_{\pi}\cap N_{\pi}}\|+\|(P_{M_\pi}P_{N_\pi})^{m}-P_{M_{\pi}\cap N_{\pi}}\|\\
&=\|(P_{M_\pi}P_{N_\pi}-P_{M_{\pi}\cap N_{\pi}})^{n}\|+\|(P_{M_\pi}P_{N_\pi}-P_{M_{\pi}\cap N_{\pi}})^{m}\|\quad(\textnormal{by Lemma \ref{powers}})\\
&\leq \|P_{M_\pi}P_{N_\pi}-P_{M_{\pi}\cap N_{\pi}}\|^{n}+\|P_{M_\pi}P_{N_\pi}-P_{M_{\pi}\cap N_{\pi}}\|^{m}\\
&=c_{\pi}(M,N)^{n}+c_{\pi}(M,N)^{m}\to 0,
\end{align*}
since $c_{\pi}(M,N)<1$. This completes the proof.
\end{proof}
We denote by $\widehat{B}$ the space of unitary equivalence classes of irreducible representations of the $C^{*}$-algebra $B$, by $\mathcal{P}(B)$ the pure state space of $B$ and by $\pi_{\sigma}$ the $GNS$-representation associated to the state $\sigma$. We can view the local Friedrichs angles as a function $\widehat{B}\to [0,1]$ and via the composition $\mathcal{P}(B)\to \widehat{B}$, also as a function on $\mathcal{P}(B)$.
\begin{corl} \label{localglobalangle} The Friedrichs angle \eqref{moduleangle} and the local Friedrichs angles \eqref{localangle} are related by $c(M,N)=\sup_{\pi\in\widehat{B}}c_{\pi}(M,N)=\sup_{\sigma\in\mathcal{P}(B)}c_{\pi_{\sigma}}(M,N)$.
\end{corl}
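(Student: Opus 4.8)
The plan is to realise the supremum over $\widehat{B}$ by means of a single faithful representation and then to use Proposition \ref{independent} to transfer between the global angle and the local angles. Throughout I work under the hypothesis that makes \eqref{moduleangle} meaningful, namely that $\Omega:=M\cap N$ is complemented, and I abbreviate $P=P_{M}$, $Q=P_{N}$, $P_{\Omega}=P_{M\cap N}$. Let $\pi_{u}:=\bigoplus_{\pi\in\widehat{B}}\pi$ be the direct sum of one representative from each class in $\widehat{B}$. Since $\|b\|=\sup_{\pi\in\widehat{B}}\|\pi(b)\|$ for every $b\in B$, the representation $\pi_{u}$ is faithful, and hence, exactly as in the proof of Proposition \ref{independent}, the induced representation $\widehat{\pi_{u}}\colon\End^{*}_{B}(X)\to\mathbb{B}(X_{\pi_{u}})$ is faithful and therefore isometric.

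First I would record how $\widehat{\pi_{u}}$ decomposes. Because $H_{\pi_{u}}=\bigoplus_{\pi}H_{\pi}$ we have $X_{\pi_{u}}=\bigoplus_{\pi}X_{\pi}$, and under this identification $P_{M_{\pi_{u}}}=\bigoplus_{\pi}P_{M_{\pi}}$, $P_{N_{\pi_{u}}}=\bigoplus_{\pi}P_{N_{\pi}}$ and $M_{\pi_{u}}\cap N_{\pi_{u}}=\bigoplus_{\pi}(M_{\pi}\cap N_{\pi})$, so that $P_{M_{\pi_{u}}\cap N_{\pi_{u}}}=\bigoplus_{\pi}P_{M_{\pi}\cap N_{\pi}}$. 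Consequently $P_{M_{\pi_{u}}}P_{N_{\pi_{u}}}-P_{M_{\pi_{u}}\cap N_{\pi_{u}}}$ is the direct sum of the operators $P_{M_{\pi}}P_{N_{\pi}}-P_{M_{\pi}\cap N_{\pi}}$, and taking norms gives
\[
c_{\pi_{u}}(M,N)=\sup_{\pi\in\widehat{B}}c_{\pi}(M,N).
\]

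It then remains to identify $c_{\pi_{u}}(M,N)$ with $c(M,N)$. If $(M,N)$ is concordant this is precisely Proposition \ref{independent}(1), and the first equality follows. If $(M,N)$ is discordant, Proposition \ref{independent}(2) gives $c_{\pi_{u}}(M,N)=1$, whence $\sup_{\pi}c_{\pi}(M,N)=1$, so I would finish this case by showing separately that $c(M,N)=1$. Using the isometry $\widehat{\pi_{u}}$ together with $\widehat{\pi_{u}}(P_{\Omega})=\bigoplus_{\pi}P_{(M\cap N)_{\pi}}$, one has $c(M,N)=\|PQ(1-P_{\Omega})\|=\sup_{\pi}\|P_{M_{\pi}}P_{N_{\pi}}(1-P_{(M\cap N)_{\pi}})\|$. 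By Theorem \ref{locharm}, discordance produces an irreducible $\pi_{0}$ with a strict inclusion $(M\cap N)_{\pi_{0}}\subsetneq M_{\pi_{0}}\cap N_{\pi_{0}}$; a unit vector $v$ in the orthogonal complement of $(M\cap N)_{\pi_{0}}$ inside $M_{\pi_{0}}\cap N_{\pi_{0}}$ satisfies $P_{M_{\pi_{0}}}P_{N_{\pi_{0}}}(1-P_{(M\cap N)_{\pi_{0}}})v=v$, so the $\pi_{0}$-term is $\geq 1$; as every term is $\leq 1$ this yields $c(M,N)=1=\sup_{\pi}c_{\pi}(M,N)$. Finally, the second equality is immediate, since the map $\mathcal{P}(B)\to\widehat{B}$, $\sigma\mapsto[\pi_{\sigma}]$, is surjective and $c_{\pi}(M,N)$ depends only on the unitary equivalence class of $\pi$, so the two suprema range over the same set of numbers.

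The main obstacle is conceptual rather than computational: $\widehat{\pi}(P_{M\cap N})=P_{(M\cap N)_{\pi}}$ need not equal $P_{M_{\pi}\cap N_{\pi}}$, so the global angle $c(M,N)$ and the local angles $c_{\pi}(M,N)$ are assembled from genuinely different projections, and $a_{\pi}:=\|P_{M_{\pi}}P_{N_{\pi}}(1-P_{(M\cap N)_{\pi}})\|$ may strictly exceed $c_{\pi}(M,N)$. The case split above is what reconciles this: on the concordant locus the two families of projections agree, whereas discordance forces both suprema to equal $1$, the former by the local rank argument and the latter only through the global input of Proposition \ref{independent}. Care is needed precisely because a single index $\pi_{0}$ at which $a_{\pi_{0}}=1$ does not on its own force $c_{\pi_{0}}=1$; it is the faithfulness of $\widehat{\pi_{u}}$, not a pointwise estimate, that delivers $\sup_{\pi}c_{\pi}(M,N)=1$.
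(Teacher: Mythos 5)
Your proof is correct and follows the same route as the paper: the paper's entire proof is the one-line observation that $\bigoplus_{\pi\in\widehat{B}}\pi$ and $\bigoplus_{\sigma\in\mathcal{P}(B)}\pi_{\sigma}$ are faithful, so that Proposition \ref{independent} applies, with the direct-sum decomposition giving $c_{\pi_u}(M,N)=\sup_{\pi}c_{\pi}(M,N)$ left implicit. Your separate treatment of the discordant case --- showing $c(M,N)=1$ directly via a unit vector in $(M_{\pi_0}\cap N_{\pi_0})\ominus(M\cap N)_{\pi_0}$, rather than relying on Proposition \ref{independent}(1), which only covers the concordant case --- supplies a detail the paper's terse proof does not spell out, and is a worthwhile addition.
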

\begin{proof}
The representations $\widehat{H}=\bigoplus_{\pi\in\widehat{B}}H_{\pi}$ and $H_{\mathcal{P}}:=\bigoplus_{\sigma\in\mathcal{P}(B)}H_{\pi_{\sigma}}$ are faithful.
\end{proof}
In view of Proposition \ref{independent} and Corollary \ref{localglobalangle}, we \emph{define} the Friedrichs angle between an arbitrary pair of complemented submodules to be  $c(M,N):=c_{\pi}(M,N)$, with $\pi$ faithful. 
It was shown in \cite{Luo} that 
\begin{equation}
\label{Deutschangle}
c(M,N)=c(M^{\perp},N^{\perp}),
\end{equation}
provided that $M\cap N$ and $M^{\perp}\cap N^{\perp}$ are complemented. In particular, the equality holds for any pair of subspaces of a Hilbert space, \cite[Theorem 2.16]{D}. 
We will now show that the equality \eqref{Deutschangle} holds for an arbitrary pair of complemented submodules. This gives an extension, and a different proof, of \cite[Theorem 5.12]{Luo}.
\begin{thm}
\label{anglesymmetry}
Let $X$ be a Hilbert $C^{*}$-module and $M,N\subset X$ complemented submodules. Then $c(M,N)$ is well-defined and  $c(M,N)=c(M^{\perp},N^{\perp})$.
\end{thm}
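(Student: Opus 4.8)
The plan is to reduce the identity to the classical Hilbert space symmetry of the Friedrichs angle by localising at a single faithful representation. First I would record the cheap preliminaries: $M^{\perp}$ and $N^{\perp}$ are complemented (with complements $M$ and $N$), so $c(M^{\perp},N^{\perp})$ is defined; and the well-definedness of both sides is precisely the final assertion of Proposition \ref{independent}, namely that $c_{\pi}(M,N)$ does not depend on the choice of faithful $\pi$.

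Next I would fix a faithful representation $\pi:B\to\mathbb{B}(H_{\pi})$ and establish the crucial localisation-of-complements identity $(M^{\perp})_{\pi}=(M_{\pi})^{\perp}$, and likewise for $N$. This is where complementedness is used: from $X=M\oplus M^{\perp}$ the $\Rightarrow$ direction of the local-global principle (Proposition \ref{locglob}), which as noted after its proof remains valid for arbitrary representations rather than only irreducible ones, yields $X_{\pi}=M_{\pi}\oplus(M^{\perp})_{\pi}$ and hence $(M^{\perp})_{\pi}=(M_{\pi})^{\perp}$. Consequently the local angle of the pair $(M^{\perp},N^{\perp})$ is literally a Hilbert space angle between orthogonal complements,
\[
c(M^{\perp},N^{\perp})=c_{\pi}(M^{\perp},N^{\perp})=c\big((M^{\perp})_{\pi},(N^{\perp})_{\pi}\big)=c\big((M_{\pi})^{\perp},(N_{\pi})^{\perp}\big).
\]

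Finally I would invoke the Hilbert space result \cite[Theorem 2.16]{D} for the closed subspaces $M_{\pi},N_{\pi}\subset X_{\pi}$, which gives $c(M_{\pi},N_{\pi})=c\big((M_{\pi})^{\perp},(N_{\pi})^{\perp}\big)$, and chain the equalities
\[
c(M,N)=c_{\pi}(M,N)=c(M_{\pi},N_{\pi})=c\big((M_{\pi})^{\perp},(N_{\pi})^{\perp}\big)=c(M^{\perp},N^{\perp}).
\]
The proof carries no heavy computation; the one point that genuinely needs care is the localisation of complements $(M^{\perp})_{\pi}=(M_{\pi})^{\perp}$, and in particular the insistence on the version of the local-global principle valid for \emph{all} representations, since a faithful $\pi$ will in general be reducible. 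Everything else is bookkeeping that transports the classical symmetry across the localisation $\widehat{\pi}$.
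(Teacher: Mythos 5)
Your proposal is correct and follows essentially the same route as the paper: localise at a faithful representation, use $(M^{\perp})_{\pi}=(M_{\pi})^{\perp}$ (valid for arbitrary representations by the remark following Proposition \ref{locglob}), apply the Hilbert space identity $c(M_{\pi},N_{\pi})=c((M_{\pi})^{\perp},(N_{\pi})^{\perp})$ from \cite[Theorem 2.16]{D}, and chain the equalities via Proposition \ref{independent}. Your explicit flagging of why the all-representations version of the local-global principle is needed (a faithful $\pi$ is generally reducible) is a point the paper's proof leaves implicit.
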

\begin{proof}
For any representation $\pi:B\to\mathbb{B}(H_{\pi})$ there is an equality of submodules $(M_{\pi})^{\perp}=(M^{\perp})_{\pi}$ whenever $M$ is complemented. Moreover Equation \eqref{Deutschangle} holds for the subspaces $M_{\pi},N_{\pi}$ of the Hilbert space $X_{\pi}$. Thus by Proposition \ref{independent} we have
\begin{align*}
c(M,N)&=c_{\pi}(M,N)=c(M_{\pi},N_{\pi})\\ &=c((M_{\pi})^{\perp},(N_{\pi})^{\perp})=c((M^{\perp})_{\pi},(N^{\perp})_{\pi})\\&=c_{\pi}(M^{\perp},N^{\perp})=c(M^{\perp},N^{\perp}),\end{align*}
as claimed.
\end{proof}
Now we further analyse the properties of the local Friedrichs angles as a function on $\widehat{B}$.
\begin{prop}
\label{continuity}
Suppose $(M,N)$ is concordant. Then the map
\[\widehat{B} \to [0,1],\quad\pi\mapsto c_{\pi}(M,N),\]
is lower semi-continuous. If $X$ is full and $\widehat{B}$ is Hausdorff, $\pi\mapsto c_{\pi}(M,N)$ is continuous.
\end{prop}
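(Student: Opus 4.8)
The plan is to reduce $c_{\pi}(M,N)$ to the norm of a single adjointable operator and to analyse that norm through the vector states attached to the points of $\widehat{B}$. Since $(M,N)$ is concordant, Corollary \ref{locharmcor} gives $M_{\pi}\cap N_{\pi}=(M\cap N)_{\pi}$, hence $P_{M_{\pi}\cap N_{\pi}}=\widehat{\pi}(P_{M\cap N})$, for every representation $\pi$. Setting $A:=P_{M}P_{N}-P_{M\cap N}\in\End^{*}_{B}(X)$ and $b:=A^{*}A=P_{N}P_{M}P_{N}-P_{M\cap N}\geq 0$, this yields $c_{\pi}(M,N)=\|\widehat{\pi}(A)\|$ and $c_{\pi}(M,N)^{2}=\|\widehat{\pi}(b)\|$ for all $\pi$. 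For a pure state $\sigma\in\mathcal{P}(B)$ with GNS triple $(\pi_{\sigma},H_{\pi_{\sigma}},\xi_{\sigma})$ I would first record the formula
\[
c_{\pi_{\sigma}}(M,N)^{2}=\sup_{x\in X,\ \sigma(\langle x,x\rangle)\neq 0}\frac{\sigma(\langle x,bx\rangle)}{\sigma(\langle x,x\rangle)}.
\]
Each quotient equals $\|\widehat{\pi_{\sigma}}(A)(x\otimes\xi_{\sigma})\|^{2}/\|x\otimes\xi_{\sigma}\|^{2}$ (using $\|x\otimes\xi_{\sigma}\|^{2}=\sigma(\langle x,x\rangle)$ and $\|\widehat{\pi_{\sigma}}(A)(x\otimes\xi_{\sigma})\|^{2}=\sigma(\langle x,bx\rangle)$) and is therefore $\leq c_{\pi_{\sigma}}^{2}$. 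Equality of the supremum with $c_{\pi_{\sigma}}^{2}$ holds because $X\otimes\xi_{\sigma}:=\{x\otimes\xi_{\sigma}:x\in X\}$ is a dense $\mathbb{C}$-linear subspace of $X_{\pi_{\sigma}}$: it is a linear subspace, and it is dense since $x\otimes\pi_{\sigma}(c)\xi_{\sigma}=(xc)\otimes\xi_{\sigma}$ lies in it for all $c\in B$ while $\pi_{\sigma}(B)\xi_{\sigma}$ is dense in $H_{\pi_{\sigma}}$; a dense subspace computes the operator norm.

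For lower semicontinuity I would work on $\mathcal{P}(B)$ with the weak-$*$ topology and descend along the quotient map $\mathcal{P}(B)\to\widehat{B}$, $\sigma\mapsto[\pi_{\sigma}]$, under which $\sigma\mapsto c_{\pi_{\sigma}}(M,N)$ is the pullback of $\pi\mapsto c_{\pi}(M,N)$. Fix $t\geq 0$ and suppose $c_{\pi_{\sigma_{0}}}^{2}>t$. By the formula there is $x_{0}\in X$ with $\sigma_{0}(\langle x_{0},x_{0}\rangle)>0$ and $\sigma_{0}(\langle x_{0},bx_{0}\rangle)>t\,\sigma_{0}(\langle x_{0},x_{0}\rangle)$. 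Both $\sigma\mapsto\sigma(\langle x_{0},x_{0}\rangle)$ and $\sigma\mapsto\sigma(\langle x_{0},bx_{0}\rangle-t\langle x_{0},x_{0}\rangle)$ are weak-$*$ continuous, so on a weak-$*$ neighbourhood of $\sigma_{0}$ the first stays positive and the second stays positive; there the single quotient already exceeds $t$, whence $c_{\pi_{\sigma}}^{2}>t$. Thus $\{\sigma:c_{\pi_{\sigma}}^{2}>t\}$ is weak-$*$ open, so $\sigma\mapsto c_{\pi_{\sigma}}$ is lower semicontinuous, and since $\widehat{B}$ carries the quotient topology of $\mathcal{P}(B)$ this passes to $\pi\mapsto c_{\pi}(M,N)$ on $\widehat{B}$. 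Note that this half uses neither fullness nor Hausdorffness, as required.

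It remains, under the hypotheses that $X$ is full and $\widehat{B}$ is Hausdorff, to prove \emph{upper} semicontinuity. Fullness makes $X$ an imprimitivity bimodule, so Rieffel induction is a homeomorphism $\widehat{B}\cong\widehat{\mathbb{K}(X)}$; Hausdorffness of this spectrum makes $\mathbb{K}(X)$ a continuous field of $C^{*}$-algebras over $\widehat{B}$, so that $\pi\mapsto\|\widehat{\pi}(k)\|$ is continuous for every $k\in\mathbb{K}(X)$. The step I expect to be the main obstacle is precisely that $A$ and $b$ are \emph{multipliers} rather than compacts: the section $\pi\mapsto\widehat{\pi}(b)$ is only strictly continuous, and for a general multiplier $\pi\mapsto\|\widehat{\pi}(b)\|$ need not be upper semicontinuous. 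I would overcome this by exploiting the continuous field together with fullness to control the supremum locally, showing that over a compact Hausdorff neighbourhood the supremum in the formula above is attained on a fixed finite family $x_{1},\dots,x_{m}\in X$ with denominators $\sigma(\langle x_{i},x_{i}\rangle)$ bounded uniformly below, so that locally $c_{\pi_{\sigma}}^{2}=\max_{i}\sigma(\langle x_{i},bx_{i}\rangle)/\sigma(\langle x_{i},x_{i}\rangle)$; being locally a maximum of finitely many continuous functions, $c_{\pi}(M,N)$ is then continuous. Establishing this local finiteness and uniform lower bound—equivalently, the local norm-continuity of the multiplier section supplied by the Hausdorff continuous-field structure—is where both hypotheses enter essentially, and is the delicate point of the argument.
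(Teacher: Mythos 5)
Your first half is correct and takes a genuinely more elementary route than the paper. You establish lower semicontinuity directly, by writing $c_{\pi_\sigma}(M,N)^2$ as a supremum of the weak-$*$ continuous quantities $\sigma(\langle x,bx\rangle)/\sigma(\langle x,x\rangle)$ over the dense subspace $X\otimes\xi_\sigma\subset X_{\pi_\sigma}$ and then descending along the quotient $\mathcal{P}(B)\to\widehat{B}$. This is sound; you should just record that this map is continuous, open and surjective (so that $\widehat{B}$ really does carry the quotient topology) and note that concordance enters exactly where you invoke Corollary \ref{locharmcor} to identify $P_{M_\pi\cap N_\pi}$ with $\widehat{\pi}(P_{M\cap N})$. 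The paper instead factors $\pi\mapsto\|\widehat{\pi}(P_MP_N-P_{M\cap N})\|$ through the chain of continuous maps $\widehat{B}\to\widehat{J}\to\widehat{\mathbb{K}(X)}\to\widehat{\End^{*}_{B}(X)}$ (Rieffel induction plus the Dauns inclusion for the essential ideal $\mathbb{K}(X)$) and quotes \cite[Lemma A.30]{RW}; your argument buys independence from that machinery at the cost of a little bookkeeping with pure states.

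The second half, however, is not a proof. You correctly isolate the obstruction --- $A=P_MP_N-P_{M\cap N}$ lies in $\End^{*}_{B}(X)=M(\mathbb{K}(X))$ rather than in $\mathbb{K}(X)$, so the continuity of $\pi\mapsto\|\widehat{\pi}(k)\|$ for $k\in\mathbb{K}(X)$ furnished by the Hausdorff spectrum does not apply directly, and for a general multiplier upper semicontinuity of the norm function can genuinely fail --- but the device you propose to circumvent it (a finite family $x_1,\dots,x_m$ attaining the supremum over a neighbourhood, with denominators $\sigma(\langle x_i,x_i\rangle)$ uniformly bounded below) is only announced, never constructed, and you say yourself that establishing it is ``the delicate point''. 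Nothing in the write-up shows such a family exists, and it is far from clear that it does in the stated generality, so the continuity claim remains unproven. The paper closes this half by citing \cite[Lemma 5.2]{RW} for the composite map; to make your version complete you would either have to prove the local-finiteness claim or supply some other reason why the specific multiplier $b=P_NP_MP_N-P_{M\cap N}$ has an upper semicontinuous norm function, and that is precisely the content you have deferred.
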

\begin{proof} Let $J:=\left\langle B,B\right\rangle$ and $\widehat{B}\to\widehat{J}$ the restriction map, which is continuous. The $C^{*}$-algebras $J$ and $\mathbb{K}(X)$ are Morita equivalent, so by the Rieffel correspondence \cite{RieffelInd} the map $\pi\mapsto\widehat{\pi}$ is a homeomorphism $\widehat{J} \to \widehat{\mathbb{K}(X)}$. Since $\mathbb{K}(X)\subset \End^{*}_{B}(X)$ is an essential ideal, there is a continuous inclusion $\widehat{\mathbb{K}(X)}\to \widehat{\End^{*}_{B}(X)}$, see \cite[Section 2]{Dauns}. When $(M,N)$ is concordant the map $\pi\mapsto c_{\pi}(M,N)$ can be written as a composition
\[\pi\mapsto\widehat{\pi}\mapsto \|\widehat{\pi}(P_{M}P_{N}-P_{M\cap N})\|,\]
and is thus lower semicontinuous by \cite[Lemma A.30]{RW}. For $X$ full and $\widehat{B}$ Hausdorff, continuity follows by \cite[Lemma 5.2]{RW}.
\end{proof}
\begin{corl} 
Suppose $X$ is full, $B$ is unital, $\widehat{B}$ is Hausdorff and $(M,N)$ is concordant. Then $c(M,N)<1$ if and only if $c_{\pi}(M,N)<1$ for every irreducible representation $\pi$.
\end{corl}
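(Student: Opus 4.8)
The plan is to reduce the statement to two facts already in hand: the supremum formula $c(M,N)=\sup_{\pi\in\widehat{B}}c_\pi(M,N)$ from Corollary~\ref{localglobalangle}, and the continuity of the map $\pi\mapsto c_\pi(M,N)$ on $\widehat{B}$ provided by Proposition~\ref{continuity}, which applies precisely because we are assuming $X$ full, $\widehat{B}$ Hausdorff and $(M,N)$ concordant. With these in place, the forward implication is immediate: if $c(M,N)<1$, then for every irreducible $\pi$ we have $c_\pi(M,N)\le\sup_{\rho\in\widehat{B}}c_\rho(M,N)=c(M,N)<1$.

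The substantive direction is the converse, where the issue is that a family of values each strictly below $1$ may have supremum equal to $1$; ruling this out requires compactness. Here I would invoke that a unital $C^{*}$-algebra has quasi-compact spectrum. This is standard (see Dixmier), and can be seen directly: an open cover of $\widehat{B}$ corresponds to a family of closed ideals $\{I_i\}$ with $\overline{\sum_i I_i}=B$, and unitality then forces $1$ to lie in the closure of a finite partial sum, so that a finite subfamily already generates $B$, giving a finite subcover. Since $\widehat{B}$ is moreover assumed Hausdorff, it is compact Hausdorff. The continuous $[0,1]$-valued function $\pi\mapsto c_\pi(M,N)$ therefore attains its supremum at some $\pi_0\in\widehat{B}$, whence $c(M,N)=\sup_{\pi}c_\pi(M,N)=c_{\pi_0}(M,N)$. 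The hypothesis $c_{\pi_0}(M,N)<1$ then yields $c(M,N)<1$.

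I expect the only genuine obstacle to be the compactness of $\widehat{B}$ under the unitality hypothesis; once that is available, the argument is pure bookkeeping with the supremum formula and the continuity statement. The one point that must be checked carefully is that concordance is indeed among the hypotheses, so that Proposition~\ref{continuity} delivers honest continuity of $\pi\mapsto c_\pi(M,N)$ rather than mere lower semicontinuity, as lower semicontinuity alone would not guarantee that the supremum is attained.
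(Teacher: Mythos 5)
Your proof is correct and follows essentially the same route as the paper: compactness of $\widehat{B}$ (from unitality plus the Hausdorff hypothesis), continuity of $\pi\mapsto c_{\pi}(M,N)$ from Proposition \ref{continuity} (which requires concordance, as you rightly flag), and the supremum formula of Corollary \ref{localglobalangle}. The only difference is that you spell out the standard quasi-compactness argument for the spectrum of a unital $C^{*}$-algebra, which the paper simply asserts.
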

\begin{proof} 
Since $\widehat{B}$ is compact Hausdorff and the Friedrichs angle is continuous, 
the pointwise estimate  $c_{\pi}(M,N)<1$ implies that $c(M,N)=\sup_{\pi\in\widehat{B}} c_{\pi}(M,N) <1$.
\end{proof}
\begin{rmk}
\label{discontangle}
In Proposition \ref{continuity}, the condition that $(M,N)$ be concordant cannot be relaxed. Consider $B=C([0,\frac{\pi}{2}])$, $X=C([0,\frac{\pi}{2}], \mathbb{C}^{2})$ and consider the submodules
\[
M=\Ran \begin{pmatrix} 1 & 0 \\ 0 & 0 \end{pmatrix},\quad N
= \Ran \begin{pmatrix} \cos^{2}t & \sin t \cos t \\ \sin t \cos t &\sin^{2}t\end{pmatrix}.
\]
For $t\in [0,\pi/2]$ we write $c_{t}(M,N)$ for the Friedrichs angle at $t$. For $0<t\leq \frac{\pi}{2}$ we have $M_{t}\cap N_{t}=\{0\}$ whereas at $t=0$ we have $M_0=N_0$. We thus have
\[
c_{t}(M,N)^{2}=\|P_{M_{t}}P_{N_{t}}\|^{2}=\left\|\begin{pmatrix}\cos^{2}t & \sin t \cos t \\ 0 & 0\end{pmatrix}\right\|^{2}
=\cos^{2}t <1,\quad 0<t\leq \frac{\pi}{2},
\]
and $c_{t}(M,N)=|\cos t|$, whereas $c_0(M,N)=0$. Thus the angle is discontinuous at $0$ and in particular we conclude once more that $(M,N)$ is discordant. This is another instance where the universal example (see Remark \ref{rmkuniversal}) provides a counterexample to a specific property. 

\end{rmk}

\subsection{Sum and intersection}
\label{sec: sumint}
With our extended definition of the Friedrichs angle we now examine the case $c(M,N)<1$ in more detail. Our results 
are all derived from results in \cite{Luo}, where complementability of $M\cap N$ is an assumption. We first recall the following well-known fundamental result and a relevant corollary.
\begin{thm}\label{range} Let $T\in \End^{*}_{B}(X)$, then
\[\overline{\textnormal{Ran}(T)}=\overline{\textnormal{Ran}(TT^{*})},\]
and $T$ has closed range if and only if $T^{*}$ has closed range. If $T$ has closed range then
\[\textnormal{Ran}(T)=\textnormal{Ran}(TT^{*}),\]
and $\textnormal{Ran}(T)$ is a complemented submodule of $X$ with $\textnormal{Ran}(T)^{\perp}=\ker T^{*}$.
\end{thm}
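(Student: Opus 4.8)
The plan is to derive all four assertions from the continuous functional calculus applied to the positive operators $TT^{*}$ and $T^{*}T$ in the unital $C^{*}$-algebra $\End^{*}_{B}(X)$, supplemented by the open mapping theorem. The guiding principle is that, although orthogonal complements in $X$ are badly behaved (a closed submodule with trivial orthogonal complement need not be all of $X$), the operators $TT^{*}$ and $T^{*}T$ live in a genuine $C^{*}$-algebra where the functional calculus is available, and it is through their spectra that all range and complementability questions get resolved. First I would record the elementary identities $(\textnormal{Ran}\,T)^{\perp}=\ker T^{*}$ and $\ker TT^{*}=\ker T^{*}$, both immediate from the adjoint relation $\langle y, Tx\rangle=\langle T^{*}y, x\rangle$ together with positivity of the inner product.

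For the density statement $\overline{\textnormal{Ran}\,T}=\overline{\textnormal{Ran}\,TT^{*}}$ the inclusion $\supseteq$ is trivial, and for $\subseteq$ I would use a resolvent approximation. Writing $A=TT^{*}$ and using the intertwining identity $(A+\varepsilon)^{-1}T=T(T^{*}T+\varepsilon)^{-1}$, one has
\[
Tx-A(A+\varepsilon)^{-1}Tx=\varepsilon(A+\varepsilon)^{-1}Tx=\varepsilon\, T(T^{*}T+\varepsilon)^{-1}x,
\]
whose squared norm equals $\|\langle x, k_{\varepsilon}(T^{*}T)x\rangle\|$ with $k_{\varepsilon}(t)=\varepsilon^{2}t(t+\varepsilon)^{-2}$. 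Since $\sup_{t\ge 0}k_{\varepsilon}(t)=\varepsilon/4\to 0$ while the left member lies in $\textnormal{Ran}\,TT^{*}$, this shows $Tx\in\overline{\textnormal{Ran}\,TT^{*}}$.

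Next, assuming $T$ has closed range $R:=\textnormal{Ran}\,T$, I would extract a bounded-below estimate from the open mapping theorem: since $T\colon X\to R$ is a surjection of Banach spaces there is $c>0$ so that every $y\in R$ has a preimage $x$ of norm $\le c\|y\|$, whence $\|y\|^{2}=\|\langle x, T^{*}y\rangle\|\le c\|y\|\,\|T^{*}y\|$ and, squaring and inserting $TT^{*}$, the key inequality $\|TT^{*}y\|\ge c^{-2}\|y\|$ for all $y\in R$. A bump-function argument in the functional calculus then forces $\spec(TT^{*})\cap(0,c^{-2})=\varnothing$: were some $\lambda$ in this interval in the spectrum, a positive $f$ supported near $\lambda$ would produce vectors $y=f(TT^{*})x\in\textnormal{Ran}\,TT^{*}\subseteq R$ with $\|TT^{*}y\|$ arbitrarily small relative to $\|y\|$, contradicting the estimate. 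With $0$ isolated in (or absent from) $\spec(TT^{*})$, the projection $E:=g(TT^{*})$ for a continuous $g$ equal to $1$ on $[c^{-2},\|TT^{*}\|]$ and vanishing at $0$ is an adjointable projection with $\textnormal{Ran}\,E=\textnormal{Ran}\,TT^{*}$. This simultaneously yields $\textnormal{Ran}\,TT^{*}=\overline{\textnormal{Ran}\,TT^{*}}=R=\textnormal{Ran}\,T$ (the third assertion), complementability of $R$, and $R^{\perp}=\ker E=\ker TT^{*}=\ker T^{*}$ (the fourth assertion). Finally, since the nonzero spectra of $TT^{*}$ and $T^{*}T$ coincide, $0$ is likewise isolated in (or absent from) $\spec(T^{*}T)$; the identical spectral-projection argument shows $T^{*}T$, hence $T^{*}$, has closed range, which is the second assertion.

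The main obstacle is exactly the step converting closedness of $\textnormal{Ran}\,T$ into a spectral gap for $TT^{*}$. In a Hilbert space one would note that $T^{*}$ is bounded below on $(\ker T^{*})^{\perp}=\overline{\textnormal{Ran}\,T}$ and that a self-adjoint operator bounded below on its range is invertible there; in the module setting the vectorwise norm bound $\|TT^{*}y\|\ge c^{-2}\|y\|$ on $R$ does \emph{not} by itself give an order bound $TT^{*}\ge c^{-2}$ on $R$, so invertibility cannot be read off directly. The functional-calculus bump argument is precisely what bridges this gap: it shows that a positive operator which is norm-bounded-below on the closure of its range cannot have $0$ as a non-isolated spectral point, and it is this spectral information, rather than any Hilbert-space-style orthogonality, that manufactures the projection onto the range.
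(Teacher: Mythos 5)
Your proof is correct and essentially complete. For comparison: the paper does not actually prove Theorem \ref{range} --- its ``proof'' is the citation to Lance (Theorem 3.2 and Proposition 3.7) --- so what you have produced is a self-contained reconstruction of the standard argument rather than a departure from anything in the text. Both halves are sound: the density statement $\overline{\textnormal{Ran}(T)}=\overline{\textnormal{Ran}(TT^{*})}$ follows from your identity $\|Tx-TT^{*}(TT^{*}+\varepsilon)^{-1}Tx\|^{2}=\|\langle x,k_{\varepsilon}(T^{*}T)x\rangle\|\le\tfrac{\varepsilon}{4}\|x\|^{2}$, and the closed-range assertions from the open mapping theorem combined with the spectral gap for $TT^{*}$. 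You correctly isolate and repair the one genuinely delicate point, namely that the vectorwise bound $\|TT^{*}y\|\ge c^{-2}\|y\|$ on $R=\textnormal{Ran}(T)$ is not an order inequality and so does not directly yield invertibility; the bump-function argument works because $f$ vanishing near $0$ means $f(t)/t$ extends continuously, so $\textnormal{Ran}(f(TT^{*}))\subseteq\textnormal{Ran}(TT^{*})\subseteq R$ where the lower bound applies, while $t^{2}f(t)^{2}\le(\lambda+\delta)^{2}f(t)^{2}$ gives $\|TT^{*}y\|\le(\lambda+\delta)\|y\|<c^{-2}\|y\|$, the desired contradiction. (A minor wording quibble: the ratio $\|TT^{*}y\|/\|y\|$ is pinned near $\lambda$ rather than being ``arbitrarily small'', but the contradiction only needs it to drop below $c^{-2}$.) The concluding deductions --- that $E=g(TT^{*})$ is an adjointable projection with $\textnormal{Ran}(E)=\textnormal{Ran}(TT^{*})$, hence $\textnormal{Ran}(T)=\textnormal{Ran}(TT^{*})$ is complemented with complement $\ker E=\ker TT^{*}=\ker T^{*}$, and that equality of the nonzero spectra of $TT^{*}$ and $T^{*}T$ transfers the gap to $T^{*}T$ and thereby closedness to $\textnormal{Ran}(T^{*})$ --- all check out.
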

\begin{proof}
See \cite[Theorem 3.2, Proposition 3.7]{Lance}. 
\end{proof}
\begin{corl}[\cite{Luo}, Remark 5.8]\label{sumrange}
Let $P,Q$ be adjointable projections on a Hilbert $C^{*}$-module $X$, then 
\[\overline{\textnormal{Ran}(P)+\textnormal{Ran}(Q)}=\overline{\textnormal{Ran}(P+Q)}.\]
In particular
$\textnormal{Ran}(P)+\textnormal{Ran}(Q)$ is closed if and only if $\textnormal{Ran}(P+Q)$ is closed and in that case
\[\textnormal{Ran}(P)+\textnormal{Ran}(Q)=\textnormal{Ran}(P+Q),\]
which is a complemented submodule of $X$.
\end{corl}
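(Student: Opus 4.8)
The plan is to realise $\textnormal{Ran}(P)+\textnormal{Ran}(Q)$ as the range of a single adjointable operator, so that Theorem \ref{range} applies directly. Consider the row operator
\[
T:=\begin{pmatrix} P & Q\end{pmatrix}:X\oplus X\to X,\qquad T(x,y)=Px+Qy,
\]
which is adjointable between the Hilbert $C^{*}$-modules $X\oplus X$ and $X$, with adjoint $T^{*}z=(Pz,Qz)$ (using $P^{*}=P$ and $Q^{*}=Q$). A one-line computation, using $P^{2}=P$ and $Q^{2}=Q$, shows
\[
TT^{*}=P^{2}+Q^{2}=P+Q .
\]
Since $\textnormal{Ran}(T)=\textnormal{Ran}(P)+\textnormal{Ran}(Q)$ by construction, I would first invoke the equality $\overline{\textnormal{Ran}(T)}=\overline{\textnormal{Ran}(TT^{*})}$ of Theorem \ref{range} to conclude $\overline{\textnormal{Ran}(P)+\textnormal{Ran}(Q)}=\overline{\textnormal{Ran}(P+Q)}$, which is the first assertion.

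For the closed-range statement I would argue both implications. If $\textnormal{Ran}(T)$ is closed, then Theorem \ref{range} gives at once $\textnormal{Ran}(T)=\textnormal{Ran}(TT^{*})$, so $\textnormal{Ran}(P+Q)$ is closed and equals $\textnormal{Ran}(P)+\textnormal{Ran}(Q)$. For the converse, suppose $\textnormal{Ran}(TT^{*})=\textnormal{Ran}(P+Q)$ is closed. Here Theorem \ref{range} does not apply verbatim, so I would use the inclusion chain
\[
\textnormal{Ran}(TT^{*})\subseteq \textnormal{Ran}(T)\subseteq \overline{\textnormal{Ran}(T)}=\overline{\textnormal{Ran}(TT^{*})}=\textnormal{Ran}(TT^{*}),
\]
where the first inclusion is $TT^{*}z=T(T^{*}z)$, the middle equality is Theorem \ref{range}, and the last equality is the assumed closedness. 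This forces $\textnormal{Ran}(T)=\textnormal{Ran}(TT^{*})$, so $\textnormal{Ran}(P)+\textnormal{Ran}(Q)$ is closed and again coincides with $\textnormal{Ran}(P+Q)$. Finally, complementability of $\textnormal{Ran}(T)$ in this case is precisely the last clause of Theorem \ref{range}.

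The routine computation $TT^{*}=P+Q$ and the trivial inclusion $\textnormal{Ran}(TT^{*})\subseteq\textnormal{Ran}(T)$ carry the whole argument, so there is no serious analytic difficulty. The one point deserving care is that Theorem \ref{range} is phrased for an endomorphism $T\in\End^{*}_{B}(X)$, whereas here $T$ maps $X\oplus X$ to $X$; I would note that the cited results of Lance hold for adjointable operators between arbitrary Hilbert $C^{*}$-modules, so applying them to $T:X\oplus X\to X$ is legitimate (alternatively one could pad $T$ to an endomorphism of $X\oplus X$ without altering the relevant ranges). The only other subtlety is that the theorem as stated yields the implication ``$\textnormal{Ran}(T)$ closed $\Rightarrow\textnormal{Ran}(T)=\textnormal{Ran}(TT^{*})$'' but not its reverse directly, which is exactly why the short inclusion-chain argument is needed to handle the converse.
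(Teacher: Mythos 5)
Your proof is correct and takes essentially the same approach as the paper: the paper factors $P+Q$ as $TT^{*}$ using the padded $2\times 2$ matrix $\bigl(\begin{smallmatrix}0&0\\P&Q\end{smallmatrix}\bigr)$ on $X\oplus X$, which is exactly the endomorphism version of your row operator that you yourself mention as an alternative. Your explicit inclusion-chain argument for the converse direction is a welcome bit of extra care that the paper leaves implicit.
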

\begin{proof}
By the following well-known observation
\[
\begin{pmatrix} 0 & 0 \\ 0 & P+Q\end{pmatrix}=\begin{pmatrix}0 & 0 \\ P & Q\end{pmatrix}\cdot \begin{pmatrix}0 & P \\ 0 & Q\end{pmatrix}=\begin{pmatrix}0 & 0 \\ P & Q\end{pmatrix}\cdot \begin{pmatrix}0 & 0 \\ P & Q\end{pmatrix}^{*},
\]
as operators on $X\oplus X$, it follows that $\overline{\textnormal{Ran}(P+Q)}=\overline{\textnormal{Ran}(P)+\textnormal{Ran}(Q)}$ by Theorem \ref{range}. Therefore,
 $\textnormal{Ran}(P)+\textnormal{Ran}(Q)$ is closed if and only if $\textnormal{Ran}(P)+\textnormal{Ran}(Q)=\textnormal{Ran}(P+Q)$ is closed.
\end{proof}
\begin{lemma}
\label{kernel}
Let $P,Q$ be adjointable projections on a Hilbert $C^{*}$-module $X$, then 
\[\ker(1-PQ)=\ker(1-QP)=\Ran P\cap \Ran Q.\]
\end{lemma}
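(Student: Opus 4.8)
The plan is to establish both set equalities by proving the two inclusions separately, using only the defining properties of adjointable projections together with positivity of the $B$-valued inner product, exactly as in the computation at the end of Proposition \ref{Cauchy}. The second kernel is then handled by symmetry.

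First the easy direction. If $x\in\Ran P\cap\Ran Q$ then $Px=x$ and $Qx=x$, so $PQx=Px=x$ and $QPx=Qx=x$; hence $x$ lies in both $\ker(1-PQ)$ and $\ker(1-QP)$. This gives the inclusions $\Ran P\cap\Ran Q\subseteq\ker(1-PQ)$ and $\Ran P\cap\Ran Q\subseteq\ker(1-QP)$. For the reverse inclusion, suppose $x\in\ker(1-PQ)$, i.e. $x=PQx$. Any adjointable projection satisfies $\langle Py,Py\rangle\leq\langle y,y\rangle$ for all $y$, since $P^{*}=P^{2}=P$ gives $\langle Py,Py\rangle=\langle y,Py\rangle$ and $1-P$ is positive; the same holds for $Q$. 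Applying these estimates to $x=PQx$ yields the sandwich
\[
\langle x,x\rangle=\langle PQx,PQx\rangle\leq\langle Qx,Qx\rangle\leq\langle x,x\rangle,
\]
so both inequalities are equalities. In particular $\langle Qx,Qx\rangle=\langle x,x\rangle$, and since $\langle x,x\rangle=\langle Qx,Qx\rangle+\langle(1-Q)x,(1-Q)x\rangle$ we deduce $\langle(1-Q)x,(1-Q)x\rangle=0$, whence $(1-Q)x=0$ and $Qx=x$. Then $x=PQx=Px$, so $x\in\Ran P\cap\Ran Q$.

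This proves $\ker(1-PQ)=\Ran P\cap\Ran Q$, and exchanging the roles of $P$ and $Q$ throughout gives $\ker(1-QP)=\Ran Q\cap\Ran P=\Ran P\cap\Ran Q$, completing the argument. There is no serious obstacle here: the only point requiring the $C^{*}$-module rather than the Hilbert space setting is that the estimates must be phrased with the $B$-valued inner product and the operator inequality $\langle Py,Py\rangle\leq\langle y,y\rangle$, with positivity of $\langle(1-Q)x,(1-Q)x\rangle$ forcing $(1-Q)x=0$. This is precisely the manipulation already carried out in the proof of Proposition \ref{Cauchy}, so the argument is short and self-contained.
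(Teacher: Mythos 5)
Your proof is correct and follows essentially the same route as the paper's: both reduce to showing that $x=PQx$ forces $\langle Qx,Qx\rangle=\langle x,x\rangle$ via a positivity sandwich with the $B$-valued inner product, then conclude $(1-Q)x=0$ and finish by symmetry in $P$ and $Q$. The only cosmetic difference is that the paper obtains the lower bound by expanding $\langle Qx,Qx\rangle$ through $P$ and $1-P$, while you chain $\langle x,x\rangle=\langle PQx,PQx\rangle\leq\langle Qx,Qx\rangle\leq\langle x,x\rangle$ directly; these are the same computation.
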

\begin{proof} It is clear that $\Ran P\cap \Ran Q\subset \ker (1-PQ)\cap \ker (1-QP)$. We prove the reverse inclusion.
Suppose that $x\in \ker(1-PQ)$, so that $x=PQx$. Then clearly $x=Px$ and 
\[\langle Qx, Qx\rangle=\langle PQx, PQx\rangle+\langle (1-P)Qx,(1-P)Qx\rangle\geq \langle PQx,PQx\rangle=\langle x,x\rangle,\]
and thus $(1-Q)x=0$ so $x=Qx$. By reversing the roles of $P$ and $Q$ this shows that
\[\ker(1-PQ)=\ker(1-QP)=\{x\in X: x=Px=Qx\}=\Ran P\cap \Ran Q,\]
as claimed.\end{proof}
Lastly we recall the following remarkable result from \cite{LSX}, concerning the case where the internal sum $M+N$ is closed.
\begin{prop}[{\cite[Proposition 4.6]{LSX}}]\label{closedsum} Let $M$ and $N$ be complemented submodules of $X$, such that $M + N$ is closed in $X$. Then both $M + N$ and
$M \cap N$ are  complemented and $X=(M\cap N)\oplus (M^{\perp}+N^{\perp})$.
\end{prop}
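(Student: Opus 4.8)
The plan is to reduce the whole statement to a spectral fact about the single self-adjoint operator $B:=P+Q-1$, where $P=P_{M}$ and $Q=P_{N}$. First I would record, by direct computation, the two-projection relations: writing $A:=P-Q$, one has $A^{*}=A$, $B^{*}=B$ and
\[
A^{2}+B^{2}=1,\qquad AB+BA=0.
\]
Since $M+N=\Ran P+\Ran Q$, Corollary \ref{sumrange} shows that $M+N$ is closed if and only if $\Ran(P+Q)=\Ran(1+B)$ is closed, and in that case $M+N$ is complemented. Dually, $M^{\perp}+N^{\perp}=\Ran(1-P)+\Ran(1-Q)$ is closed if and only if $\Ran(2-P-Q)=\Ran(1-B)$ is closed, and then complemented. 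If I can show $M^{\perp}+N^{\perp}$ is complemented, then $M\cap N=(M^{\perp}+N^{\perp})^{\perp}$ is its orthogonal complement, which simultaneously yields the complementability of $M\cap N$ and the decomposition $X=(M\cap N)\oplus(M^{\perp}+N^{\perp})$. Thus the whole statement reduces to the implication: $\Ran(1+B)$ closed $\Rightarrow\Ran(1-B)$ closed.

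Both $1+B=P+Q$ and $1-B=2-P-Q$ are positive, and for a positive element of a $C^{*}$-algebra closed range is equivalent to $0$ being isolated in (or absent from) its spectrum: if $\Ran(1+B)$ is closed then by Theorem \ref{range} it is complemented with complement $\ker(1+B)$, on which $1+B$ vanishes while being invertible on the range, so $0$ is isolated in $\spec(1+B)$; the converse is the spectral projection. Hence the hypothesis says $-1$ is isolated in $\spec(B)$, and the goal becomes that $+1$ is isolated in $\spec(B)$. The bridge is the symmetry of the generic part of the spectrum,
\[
\lambda\in\spec(B)\cap(-1,1)\ \Longrightarrow\ -\lambda\in\spec(B).
\]

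To establish this symmetry I would exploit $AB=-BA$ through functional calculus, since $A$ itself is not invertible and so cannot be used to conjugate $B$ to $-B$ directly. For real $f\in C_{c}((-1,1))$ set $k(t):=f(t)(1-t^{2})^{-1/2}$, a bounded continuous function supported in $(-1,1)$, and put $w:=A\,k(B)$. Using $A^{2}=1-B^{2}$ one computes $w^{*}w=k(B)^{2}(1-B^{2})=f(B)^{2}$, while $AB=-BA$ gives $wB=-Bw$ and hence $w\phi(B)=\phi(-B)w$ for every continuous $\phi$. Now fix $\lambda\in\spec(B)\cap(-1,1)$ and choose a bump $g\in C_{c}((-1,1))$ with $g(\lambda)=1$ supported where $f\equiv1$; then $(wg(B))^{*}(wg(B))=g(B)^{2}\neq0$, so $g(-B)w=wg(B)\neq0$ and therefore $g(-B)\neq0$, i.e.\ $\spec(B)$ meets $-\supp g$. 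Shrinking $g$ around $\lambda$ forces $-\lambda\in\spec(B)$.

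I would then close by contradiction: if $\Ran(1-B)$ were not closed, then $+1\in\spec(B)$ is not isolated, so there are $\lambda_{n}\in\spec(B)\cap(-1,1)$ with $\lambda_{n}\to1$; by the symmetry $-\lambda_{n}\in\spec(B)$ with $-\lambda_{n}\to-1$, contradicting the isolation of $-1$. Hence $\Ran(1-B)$ is closed, and the conclusion follows as in the first paragraph. The main obstacle is precisely the spectral symmetry: the naive conjugation of $B$ into $-B$ by $A$ fails because $A=P-Q$ is not invertible — its kernel carries the endpoint eigenvalues $\pm1$, corresponding to $M\cap N$ and $M^{\perp}\cap N^{\perp}$ — and the functional-calculus bump argument is designed exactly to confine attention to the generic interval $(-1,1)$ where $1-B^{2}$ is positive.
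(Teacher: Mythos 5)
Your proposal is correct, and it necessarily differs from the paper, because the paper does not prove this proposition at all: it is imported verbatim as \cite[Proposition~4.6]{LSX}, so your argument should be measured against that reference rather than against anything in the text. Your reduction is sound: Corollary \ref{sumrange} converts the hypothesis into ``$\Ran(1+B)$ is closed'' and the goal into ``$\Ran(1-B)$ is closed'' for $B=P+Q-1$, and once $M^{\perp}+N^{\perp}=\Ran(2-P-Q)$ is closed, Theorem \ref{range} gives $X=(M^{\perp}+N^{\perp})\oplus\ker(2-P-Q)$ with $\ker(2-P-Q)=M\cap N$, which is exactly the asserted decomposition. The spectral bridge is also correct: the identities $A^{2}+B^{2}=1$ and $AB+BA=0$ for $A=P-Q$ are elementary, $A\phi(B)=\check{\phi}(B)A$ with $\check{\phi}(t)=\phi(-t)$ follows from the polynomial case by density, and the partial isometry trick $w=Ak(B)$ with $w^{*}w=f(B)^{2}$ correctly transports spectrum in $(-1,1)$ to its negative while sidestepping the non-invertibility of $A$ at the endpoint eigenspaces. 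The one ingredient you use that is not explicitly in the paper is the equivalence, for a positive adjointable operator $T$, between closedness of $\Ran T$ and $0$ being isolated in or absent from $\spec(T)$; both directions do follow from Theorem \ref{range} together with continuous functional calculus (the spectral projection $\chi_{\spec(T)\setminus\{0\}}(T)$ lies in $C^{*}(T)$ and has the same range as $T$), but it deserves to be stated as a lemma rather than a parenthetical. A pleasant by-product of your route, worth recording, is the symmetry of the whole argument under $P\mapsto 1-P$, $Q\mapsto 1-Q$ (i.e.\ $B\mapsto -B$): it yields directly that $M+N$ is closed if and only if $M^{\perp}+N^{\perp}$ is closed, which the paper also only quotes from \cite{LSX}.
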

In the above result, complementability of $M\cap N$ is concluded as opposed to assumed, and we now characterise closedness of $M+N$ in terms of our extended definition of $c(M,N)$, as well as in terms of properties of the operators $1-P_{M}P_{N}$ and $1-P_{N}P_{M}$.
\begin{prop}
\label{prop: closed range}
Let $X$ be a Hilbert $C^{*}$-module over a $C^{*}$-algebra $B$, and $M$ and $N$ complemented submodules. 
Then the following are equivalent: 
\begin{enumerate}
\item $c(M,N)<1$;
\item The sequence $(P_{M}P_{N})^{n}$ is Cauchy for the operator norm;
\item $M\cap N$ is complemented and $ \| P_{M}P_{N}-P_{M\cap N} \| = \|P_{N}P_{M}-P_{M\cap N}\|<1$;
\item $M\cap N$ is complemented and the operators $1-P_{M}P_{N}:(M\cap N)^{\perp}\to (M\cap N)^{\perp}$ and $1-P_{M}P_{N}:(M\cap N)^{\perp}\to (M\cap N)^{\perp}$ are bijective; 
\item $\textnormal{Ran}(1-P_{M}P_{N})$ and $\textnormal{Ran}(1-P_{N}P_{M})$ are closed;
\item $X=(M\cap N)\oplus (M^{\perp}+N^{\perp})$;
%\item $M\cap N$ is complemented and $M^{\perp}+N^{\perp}$ is closed; 
\item $M^{\perp}+N^{\perp}$ is closed;
\item $X=(M^{\perp}\cap N^{\perp})\oplus (M+N)$;
\item $M+N$ is closed;
\item $M\cap N$ is complemented and $(M\cap N)^{\perp}\cap M+(M\cap N)^{\perp}\cap N$ is closed.

\end{enumerate}
\end{prop}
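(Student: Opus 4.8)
The plan is to organise the ten conditions into three clusters --- the angle/Cauchy conditions (1)--(3), the range conditions (4)--(5), and the geometric decomposition conditions (6)--(10) --- to prove the easy equivalences inside each cluster, and then to bridge the clusters with two genuinely operator-theoretic implications. Throughout I would write $P=P_M$, $Q=P_N$ and $\Omega=M\cap N$. Within the first cluster, $(1)\Rightarrow(2)$ is exactly the computation in the proof of Proposition \ref{independent}(2): in a faithful representation, Lemma \ref{powers} gives $\|(PQ)^n-(PQ)^m\|\le c(M,N)^n+c(M,N)^m\to 0$. For $(2)\Rightarrow(1)$ I would note that norm-Cauchy is $*$-strongly Cauchy, so Theorem \ref{complementedalternating} makes the pair concordant, $\Omega$ complemented and $(PQP)^n\to P_\Omega$ in norm; since $PQP-P_\Omega=(P-P_\Omega)Q(P-P_\Omega)\ge 0$ and $(PQP-P_\Omega)^n=(PQP)^n-P_\Omega\to0$ (Lemma \ref{powers}), its norm equals its spectral radius and hence is $<1$, giving $c(M,N)^2=\|PQP-P_\Omega\|<1$. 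Condition (3) is the concordant reformulation of (1) via Proposition \ref{independent}, the equality of the two norms coming from $(PQ-P_\Omega)^*=QP-P_\Omega$.

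The bridge from the angle cluster to the range cluster is a factorisation, which I expect to be the cleanest non-trivial point. Assuming (1), concordance yields $A:=PQ-P_\Omega$ with $\|A\|=c(M,N)<1$, so $1-A$ is invertible; since $AP_\Omega=0$ one verifies $1-PQ=(1-A)(1-P_\Omega)$, whence $\Ran(1-PQ)=(1-A)\,\Omega^\perp$ is closed, which is (5). For $(5)\Leftrightarrow(4)$ I would invoke Theorem \ref{range} ($1-PQ$ has closed range iff $1-QP$ does) together with $\ker(1-QP)=\Omega$ (Lemma \ref{kernel}), so that $\Ran(1-PQ)^\perp=\Omega$, hence $\Omega$ is complemented and each of $1-PQ,1-QP$ restricts to a bijection of $\Omega^\perp$ (surjectivity is closedness of the range, injectivity is the kernel, and a bounded adjointable bijection is invertible). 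For $(5)\Rightarrow(6)$ the key observation is the elementary inclusion $\Ran(1-PQ)\subseteq M^\perp+N^\perp$, from $x-PQx=(1-Q)x+(1-P)Qx$; combined with $\Ran(1-PQ)=\Omega^\perp\supseteq\overline{M^\perp+N^\perp}$ this forces $M^\perp+N^\perp=\Omega^\perp$ to be closed and delivers the decomposition.

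The geometric cluster I would close using Corollary \ref{sumrange} and the symmetry ``$M+N$ closed $\iff M^\perp+N^\perp$ closed'' of \cite[Proposition 4.6]{LSX}: $(6)\Rightarrow(7)$ because a direct summand is closed; $(7)\Leftrightarrow(9)$ by the symmetry; $(7)\Rightarrow(8)$ by applying Proposition \ref{closedsum} to the pair $(M^\perp,N^\perp)$, giving $X=(M^\perp\cap N^\perp)\oplus(M+N)$; and $(8)\Rightarrow(9)$ again because a summand is closed. For $(9)\Leftrightarrow(10)$ I would split $M=\Omega\oplus(M\cap\Omega^\perp)$ and $N=\Omega\oplus(N\cap\Omega^\perp)$ orthogonally, so that $M+N=\Omega\oplus\big((M\cap\Omega^\perp)+(N\cap\Omega^\perp)\big)$ and closedness of $M+N$ is equivalent to closedness of the second summand, which is the submodule in (10).

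It remains to close the whole chain, and the implication $(7)\Rightarrow(1)$ --- recovering the quantitative bound $c(M,N)<1$ from mere closedness --- is the step I expect to be the main obstacle. Here I would set $S:=2-P-Q=P_{M^\perp}+P_{N^\perp}$; by Corollary \ref{sumrange} and Theorem \ref{range}, closedness of $M^\perp+N^\perp$ makes $S$ a positive operator with closed range $\Omega^\perp$ and kernel $\Omega$, so $\langle Sx,x\rangle\ge\delta\langle x,x\rangle$ for some $\delta>0$ and all $x\in\Omega^\perp$. The identity $PQP-P_\Omega=(P-P_\Omega)Q(P-P_\Omega)\ge 0$ shows this operator is supported on $M\cap\Omega^\perp$, and on that submodule $PSP=P-PQP$ gives $\langle PQPx,x\rangle\le(1-\delta)\langle x,x\rangle$; positivity then yields $c(M,N)^2=\|PQP-P_\Omega\|\le 1-\delta<1$. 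The delicate point is precisely converting the module inequality on $\Omega^\perp$ into the norm bound, using that for a positive element $\|R\|=\sup_{\|y\|\le1}\|\langle Ry,y\rangle\|$.
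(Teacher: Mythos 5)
Your proof is correct, and while its middle portion coincides with the paper's single cycle $1\Rightarrow2\Rightarrow\cdots\Rightarrow10\Rightarrow1$ --- your factorisation $1-PQ=(1-A)(1-P_{\Omega})$ with $A=PQ-P_{\Omega}$ is literally the paper's operator $1+P_{\Omega}-PQ$, handled by invertibility of $1-A$ rather than by summing its Neumann series, and your $(5)\Rightarrow(6)\Rightarrow(7)\Rightarrow(8)\Rightarrow(9)$ is the paper's argument verbatim --- you close the loop genuinely differently. The paper disposes of $(9)\Rightarrow(10)$ and $(10)\Rightarrow(1)$ by citing \cite[Lemma 5.11]{Luo} together with Theorem \ref{anglesymmetry}; you instead prove $(9)\Leftrightarrow(10)$ by the orthogonal splitting $M+N=\Omega\oplus\bigl((M\cap\Omega^{\perp})+(N\cap\Omega^{\perp})\bigr)$ (legitimate, since Proposition \ref{closedsum} supplies complementability of $\Omega$ in the forward direction) and prove $(7)\Rightarrow(1)$ directly: $S=P_{M^{\perp}}+P_{N^{\perp}}$ has closed range $M^{\perp}+N^{\perp}=\Omega^{\perp}$ and kernel $\Omega$ by Corollary \ref{sumrange} and Theorem \ref{range}, hence restricts to an invertible positive element of $\End^{*}_{B}(\Omega^{\perp})$ bounded below by some $\delta>0$, and the identities $PQP-P_{\Omega}=(P-P_{\Omega})Q(P-P_{\Omega})$ and $PSP=P-PQP$ convert this into $0\le PQP-P_{\Omega}\le(1-\delta)$, so $c(M,N)^{2}=\|PQP-P_{\Omega}\|\le 1-\delta$. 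This buys a self-contained quantitative argument that does not lean on \cite{Luo} or on the symmetry $c(M,N)=c(M^{\perp},N^{\perp})$, at the modest cost of the standard fact that a bijective adjointable operator has adjointable inverse (so that spectral theory applies in $\End^{*}_{B}(\Omega^{\perp})$). Two small points to tighten: your remark that (3) is ``the concordant reformulation of (1) via Proposition \ref{independent}'' presupposes concordance, which (3) does not grant a priori --- the fix is that (3) gives $(PQ)^{n}-P_{\Omega}=(PQ-P_{\Omega})^{n}\to0$ in norm by Lemma \ref{powers}, i.e.\ $(3)\Rightarrow(2)$, after which Theorem \ref{complementedalternating} supplies concordance; and in $(2)\Rightarrow(1)$ you should note explicitly that the norm limit of $(PQ)^{n}$ must agree with its $*$-strong limit $P_{\Omega}$ before running the spectral-radius argument on the self-adjoint element $PQP-P_{\Omega}$.
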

\begin{proof}
We write $P=P_{M}$, $Q=P_{N}$ and $\Omega:=M\cap N$.

$1.\Rightarrow 2.$ This was shown in the proof of Proposition \ref{independent}.

$2.\Rightarrow 3.$ By Theorem \ref{complementedalternating}, $\Omega$ is complemented and $(PQ)^{n}\to P_{\Omega}$ in norm. By Lemma \ref{reductionlemma}, $(QPQ)^{n}\to P_{\Omega}$ in norm as well. Thus for $n$ sufficiently large $\|(QPQ)^{2^n}-P_{\Omega}\|<1$. Then 
applying Lemma \ref{powers} and the $C^{*}$-identity we find
\begin{align*}
\|(QPQ)^{2^n}-P_{\Omega}\|=\|(QPQ-P_{\Omega})^{2^{n}}\|=\|QPQ-P_{\Omega}\|^{2^{n}},
\end{align*}
and it follows that $\|QPQ-P_{\Omega}\|<1$. Now, again by the $C^{*}$-identity, \[\|PQ-P_{\Omega}\|^{2}=\|(QP-P_{\Omega})(PQ-P_{\Omega})\|=\|QPQ-P_{\Omega}\|,\] so we find that $\|PQ-P_{\Omega}\|<1$.

$3.\Rightarrow 4.$ Since $\| PQ-P_{\Omega}\|<1$. We find that the series
\[
\sum_{n=0}^{\infty}(PQ-P_{\Omega})^{n}=P_{\Omega}+\sum_{n=0}^{\infty}((PQ)^{n}-P_{\Omega})\quad\textnormal{(by Lemma \ref{powers})}, 
\]
is norm convergent to $(1+P_{\Omega}-PQ)^{-1}$. Since $\Omega=\ker (1-PQ)$ by Lemma \ref{kernel}, $1+P_{\Omega}-PQ$ is bijective and commutes with $P_{\Omega}$, it follows that it maps $\Omega^{\perp}$ bijectively onto itself. The same argument applies to $QP$.

$4.\Rightarrow 5.$ Since $X=\Omega\oplus \Omega^{\perp}$ and, by Lemma \ref{kernel}, $\Omega=\ker (1-PQ)=\ker (1-QP)$ it follows that 
\[\textnormal{Ran}(1-PQ)=\textnormal{Ran}(1-QP)=\Omega^{\perp},\] is closed.

$5.\Rightarrow 6.$
Since $\textnormal{Ran}(1-PQ)$ and $\textnormal{Ran}(1-QP)$ are closed, they are complemented by Theorem \ref{range}. Since $(1-PQ)^{*}=1-QP$, we have
\[\textnormal{Ran}(1-QP) =\ker(1-PQ)^{\perp}=\Omega^{\perp}=\ker(1-QP)^{\perp}=\textnormal{Ran}(1-PQ),\]
and by Lemma \ref{kernel} $\Omega=\ker(1-PQ)=\ker(1-QP)$.
Since
\[1-PQ=(1-P)Q+1-Q,\quad 1-QP=(1-Q)P+1-P,\]
it follows that 
\[
\Omega^{\perp}
=\textnormal{Ran}(1-PQ)\subset \textnormal{Ran}(1-P)+\textnormal{Ran}(1-Q)
\subset \Omega^{\perp}.
\]
Therefore
\[
\Omega^{\perp}=\textnormal{Ran}(1-P)+\textnormal{Ran}(1-Q)=M^{\perp}+N^{\perp}.
\]

$6.\Rightarrow 7.$ Since $\Omega^{\perp}$ is closed, $M^{\perp}+N^{\perp}$ is closed. % so by \cite[Lemma 5.11]{Luo} $M+N$ is closed.

$7.\Rightarrow 8.$ By Proposition \ref{closedsum} $M^{\perp}\cap N^{\perp}$ is complemented with complement $(M+N)^{\perp}$.%Since $\Omega$ is complemented, this follows from \cite[Lemma 5.11]{Luo}.

$8.\Rightarrow 9.$ Since $(M^{\perp}\cap N^{\perp})^{\perp}$ is closed $M+N$ is closed. 

$9.\Rightarrow 10.$ Since $\Omega$ is complemented, this follows from \cite[Lemma 5.11]{Luo}, and Corollary \ref{sumrange}  and Proposition \ref{closedsum} above. 

$10.\Rightarrow 1.$ By \cite[Lemma 5.11]{Luo} we have $c(M^{\perp}, N^{\perp})<1$, so by Theorem \ref{anglesymmetry} we have  $c(M,N)<1$.
\end{proof}
\begin{corl} Let $M,N\subset X$ be complemented submodules. Then $c(M,N)<1$  if and only if $M\cap N$ is complemented and $M+N$ is closed if and only if $M^{\perp}\cap N^{\perp}$ is complemented and $M^{\perp}+N^{\perp}$ is closed.
\end{corl}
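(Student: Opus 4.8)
The plan is to read the corollary off directly from Proposition~\ref{prop: closed range} together with the angle symmetry of Theorem~\ref{anglesymmetry}; no new analysis is required, so this is essentially a bookkeeping argument tying together the ten equivalent conditions already established.

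For the first biconditional I would note that $c(M,N)<1$ is item~1 of Proposition~\ref{prop: closed range}, that complementability of $M\cap N$ occurs as part of items 3, 4 and 10, and that closedness of $M+N$ is item~9. Hence, if $c(M,N)<1$ then all ten conditions hold simultaneously, so in particular $M\cap N$ is complemented and $M+N$ is closed. Conversely, if $M\cap N$ is complemented and $M+N$ is closed, then item~9 holds, and therefore so does item~1, i.e.\ $c(M,N)<1$. I would remark in passing that, by Proposition~\ref{closedsum}, closedness of $M+N$ already forces $M\cap N$ to be complemented, so the intersection hypothesis is strictly redundant; nonetheless stating both conditions is natural and makes the symmetry with the second biconditional transparent.

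For the second biconditional I would invoke Theorem~\ref{anglesymmetry} to obtain $c(M,N)=c(M^{\perp},N^{\perp})$. Since $M$ and $N$ are complemented, so are $M^{\perp}$ and $N^{\perp}$, with $(M^{\perp})^{\perp}=M$ and $(N^{\perp})^{\perp}=N$, so the first biconditional applies verbatim to the pair $(M^{\perp},N^{\perp})$ and gives $c(M^{\perp},N^{\perp})<1$ if and only if $M^{\perp}\cap N^{\perp}$ is complemented and $M^{\perp}+N^{\perp}$ is closed. Combining this with the equality of angles yields the claim. Alternatively one can read the statement straight off Proposition~\ref{prop: closed range}, since item~7 is closedness of $M^{\perp}+N^{\perp}$ and item~8 supplies complementability of $M^{\perp}\cap N^{\perp}$.

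The main obstacle is essentially nil. The only point needing a moment's care is to confirm that complementability of the two intersections genuinely sits among the equivalent conditions of Proposition~\ref{prop: closed range}---items 3, 4, 10 for $M\cap N$ and item~8 for $M^{\perp}\cap N^{\perp}$---so that it may be quoted rather than re-derived, and to keep track of the double involution $(M^{\perp})^{\perp}=M$ when transporting the first equivalence to the orthogonal complements.
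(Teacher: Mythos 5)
Your proposal is correct and matches the paper's intent exactly: the corollary is stated without proof precisely because it is read off from items 1, 3, 7, 8, 9 and 10 of Proposition~\ref{prop: closed range} (together with Theorem~\ref{anglesymmetry} or, equivalently, the double involution $(M^{\perp})^{\perp}=M$ for complemented submodules). Your side remark that Proposition~\ref{closedsum} makes the complementability hypothesis on $M\cap N$ redundant is also accurate.
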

\begin{rmk}
\label{Atiyah}
Let $E$ and $F$ be vector bundles over a compact Haursdorff space $Y$. In \cite[Definition 1.3.2]{Atiyah} a vector bundle morphism $\varphi:E\to F$ is called \emph{strict} if the map $y\mapsto \dim\ker\varphi_{y}$ is continuous, hence locally constant, on $Y$. By \cite[Proposition 1.32]{Atiyah}, if $\varphi$ is strict then $\bigcup_{y\in Y}\ker \varphi_{y}$ is a subbundle of $E$, and thus its module of sections is complemented.

As in Remarks \ref{eg:VB-unstrict} and \ref{discontangle} consider $B=C([0,\frac{\pi}{2}])$, $X=C([0,\frac{\pi}{2}], \mathbb{C}^{2})$ and consider the submodules
\[
M=\Ran \begin{pmatrix} 1 & 0 \\ 0 & 0 \end{pmatrix},\quad N
= \Ran \begin{pmatrix} \cos^{2}t & \sin t \cos t \\ \sin t \cos t &\sin^{2}t\end{pmatrix}.
\]
We obtain two globally trivial rank one vector bundles over $[0,\pi/2]$ whose modules of sections are $M$ and $N$ respectively. 
The vector bundle homomorphism $\varphi:X\to X$ defined by $\varphi:=(1-P_{M}P_{N})$
is not strict since
\[
\ker\varphi_t=\ker (1-P_{M_t}P_{N_t})=M_{t}\cap N_{t},
\]
so $t\mapsto\dim\ker\varphi_{t}$ is discontinuous at $0$ by Remark \ref{eg:VB-unstrict}. 
\end{rmk}
For a commutative unital $C^{*}$-algebra $B$ the following corollary to Proposition \ref{prop: closed range} corresponds to the situation where cosine of the angle between the corresponding sub-vector bundles is $<1$ in which case the bundle endomorphism $1-P_{M}P_{N}$ is strict.
\begin{corl}If the $C^{*}$-algebra $B$ is unital and $X$ is finitely generated and projective over $B$, then for any pair $M,N\subset X$ of complemented submodules with $c(M,N)<1$ both $M\cap N$ and $M+N$ are finitely generated projective $B$-modules.
\end{corl}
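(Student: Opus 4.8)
The plan is to deduce this corollary directly from Proposition \ref{prop: closed range}, using only the additional hypothesis that $B$ is unital and $X$ is finitely generated projective. Recall that a finitely generated projective module over a unital $C^{*}$-algebra is precisely a complemented submodule of a free module $B^{n}$; equivalently, it is the range of an adjointable projection, and it is itself a Hilbert $C^{*}$-module for which every complemented submodule is again finitely generated projective. First I would invoke the equivalence $1.\Leftrightarrow 6.\Leftrightarrow 9.$ from Proposition \ref{prop: closed range}: since $c(M,N)<1$, we immediately conclude that $M\cap N$ is complemented and that both $M+N$ and $M^{\perp}+N^{\perp}$ are closed, with the orthogonal decompositions $X=(M\cap N)\oplus(M^{\perp}+N^{\perp})$ and $X=(M^{\perp}\cap N^{\perp})\oplus(M+N)$.

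The remaining task is purely structural: to upgrade \emph{complemented} to \emph{finitely generated projective}. The key observation is that any complemented submodule $\Omega$ of a finitely generated projective module $X$ is itself finitely generated projective. Indeed, writing $P_{\Omega}\in\End^{*}_{B}(X)$ for the projection onto $\Omega$ and fixing an adjointable isomorphism $X\oplus X'\simeq B^{n}$, the composite projection onto $\Omega\subset B^{n}$ exhibits $\Omega$ as a direct summand of $B^{n}$, hence as finitely generated projective. I would apply this to $\Omega=M\cap N$, which is complemented by the previous step, to conclude that $M\cap N$ is finitely generated projective.

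For $M+N$ I would use Corollary \ref{sumrange}: since $M=\Ran P$ and $N=\Ran Q$ with $P=P_{M}$, $Q=P_{N}$ adjointable projections, and $M+N$ is closed, Corollary \ref{sumrange} gives $M+N=\Ran(P+Q)$, a complemented submodule of $X$. By the same argument as above, being a complemented submodule of the finitely generated projective module $X$, it is finitely generated projective. This completes the proof.

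The one point requiring care — and the main (though mild) obstacle — is the passage from complementedness to finite generation, since over a general $C^{*}$-algebra a closed complemented submodule of an arbitrary Hilbert module need not be finitely generated. Here the finiteness is guaranteed only because the ambient module $X$ is itself finitely generated projective over a unital $B$; the argument hinges on realising $X$ as a direct summand of some $B^{n}$ and then noting that a direct summand of a direct summand of $B^{n}$ is again a direct summand of $B^{n}$, hence finitely generated projective. Once this elementary fact is recorded, both conclusions follow at once from the equivalences already established in Proposition \ref{prop: closed range} together with Corollary \ref{sumrange}.
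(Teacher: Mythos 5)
Your proposal is correct. For $M\cap N$ it coincides with the paper's argument: $c(M,N)<1$ forces $M\cap N$ to be complemented (item \emph{6} of Proposition \ref{prop: closed range}), and a complemented submodule of a finitely generated projective module over a unital $B$ is a direct summand of some $B^{n}$, hence finitely generated projective. For $M+N$ you take a genuinely different route. The paper observes that $M+N$ is finitely generated (as a sum of two direct summands of $X$) and closed, hence a finitely generated Hilbert $C^{*}$-module over a unital algebra, and then invokes \cite[Theorem 1.4.6]{Troitsky} to conclude projectivity. You instead extract complementedness of $M+N$ directly --- either from item \emph{8} of Proposition \ref{prop: closed range}, which gives $X=(M^{\perp}\cap N^{\perp})\oplus(M+N)$, or from Corollary \ref{sumrange} once closedness is known --- and then reuse the same elementary direct-summand-of-$B^{n}$ argument. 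Both are valid; your version is more self-contained in that it treats the two submodules uniformly and avoids the nontrivial ``finitely generated Hilbert module implies projective'' theorem, at the cost of leaning a little harder on the equivalences already proved in Proposition \ref{prop: closed range}. Your closing caveat is also well placed: the finiteness really does come only from embedding $X$ as a summand of $B^{n}$, and recording that elementary fact is exactly what is needed.
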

\begin{proof}
If $X$ is finitely generated and projective over the unital algebra $B$, then any complemented submodule is finitely generated and projective, so in particular $M\cap N$ is. Since $M+N$ is finitely generated and closed in $X$, it is a Hilbert $C^{*}$-module and hence is projective by \cite[Theorem 1.4.6]{Troitsky}.
\end{proof}

\end{document}